\newcommand{\N}{\mathbb{N}}
\newcommand{\R}{\mathbb{R}}
\newcommand{\Z}{\mathbb{Z}}
\newcommand{\re}{\mathop{\mathrm{Re}}}
\newcommand{\im}{\mathop{\mathrm{Im}}}
\newcommand{\dist}{\operatorname{dist}}
\newtheorem{thm}{Theorem}[section]
\newtheorem{prop}[thm]{Proposition}
\newtheorem{cor}[thm]{Corollary}
\newtheorem{lem}[thm]{Lemma}
\numberwithin{equation}{section}
\newtheorem{rem}{Remark}
\newcommand\CI{{\mathcal I}}
\newcommand{\inp}[2]{\langle #1, #2\rangle}
\newcommand{\ol}[1]{\overline{#1}}
\newcommand{\pq}{(\tfrac1p, \tfrac1q)}
\newcommand{\ppq}{(1/p,1/q)}
\newcommand{\Be}{\begin{equation}}
\newcommand{\Ee}{\end{equation}}
\newcommand{\qs}{q_\ast}
\newcommand{\ps}{p_\ast}
\newcommand{\sucp}{{\bf sucp\! }}
\begin{document}

\author[Jeong]{Eunhee Jeong}
\address[Jeong]{Department of Mathematics Education, and  Institute of Pure and Applied Mathematics, Jeonbuk National University, Jeonju 54896, Republic of Korea}
\email{eunhee@jbnu.ac.kr}

\author[Lee]{Sanghyuk Lee}
\address[Lee]{Department of Mathematical Sciences and RIM, Seoul National University, Seoul 08826, Republic of  Korea}
\email{shklee@snu.ac.kr}

\author[Ryu]{Jaehyeon Ryu}
\address[Ryu]{School of Mathematics, Korea Institute for Advanced Study, Seoul 02455, Republic of Korea} 
\email{jhryu@kias.re.kr}

\keywords{ Unique continuation, the heat equation, Carleman estimate, Hermite operator}
\subjclass[2010]{{35K05 (primary),  35B60}}
\title[Strong unique continuation for the heat operator]
{Unique continuation for the heat operator 
\\\ with potentials in 
weak spaces
}

\begin{abstract} 
We prove strong unique continuation property for the differential inequality $|(\partial_t +\Delta)u(x,t)|\le V(x,t)|u(x,t)|$ with $V$ contained in
weak spaces. In particular, we establish the strong unique continuation property for 
$V\in L^\infty_t L^{d/2,\infty}_x$, which has been left open  since the works of Escauriaza \cite{e00} and Escauriaza-Vega \cite{ev01}. 
Our results are consequences of the Carleman estimates for the heat operator in the Lorentz spaces.  

\end{abstract}

\maketitle

\section{Introduction}

We  consider the differential inequality  
\begin{equation}\label{diff_ineq}
 |(\partial_t +\Delta)u(x,t)|\le |V(x,t)u(x,t)|,\quad (x,t)\in \R^d\times (0,T). 
 \end{equation}
For a differential operator $P$ on a domain $\Omega$, the strong unique continuation property (abbreviated  to  {\bf sucp} in what follows) for $|Pu|\le |Vu|$ means that a nontrivial solution $u$ to $|Pu|\le |Vu|$ cannot vanish to infinite order (in a suitable sense) at any point. 
The \sucp for  second order parabolic operator  has been studied by many authors (see \cite{lin, so90, poon, chen,e00,ev01,ef03, Fe03,T09, BM} 
  and references therein). In particular,  the study of \sucp  for the heat operator with time-dependent potentials  goes back to  Poon \cite{poon} and Chen \cite{chen}, who 
  considered bounded potentials.  Escauriaza \cite{e00} and Escauriaza and Vega \cite{ev01}  extended the results   to unbounded potentials $V$  under the  parabolic vanishing condition: That is to say,  for given $\delta>0$ and 
any $k\in \mathbb N$  there is a constant $C_k$ such that 
\begin{equation}\label{parabolic-decay}
 |u(x,t)|\le C_k (|x| + \sqrt{t})^k e^{(1-\delta)|x|^2/8t},\quad (x,t)\in \R^d\times (0,T).
 \end{equation}
Here, the growth condition at infinity is necessary since there exists a nonzero solution $u$ to $(\partial_t +\Delta)u=0$ such that $u$ vanishes to infinite order in the space-time variables at any point $(x,0)$, $x\in \R^d$ (see, for example, \cite{john,e00}).

The \sucp for the Laplacian $-\Delta$
is better understood. Since the pioneering work of Carleman \cite{Ca39},  most of subsequent results were obtained by following his idea, the Carleman weighted inequality. In particular, Jerison and Kenig \cite{JK} proved the \sucp for the Laplacian with $V\in L^{d/2}_{loc},$ $d\ge3$.  
Their result  was  extended by Stein \cite{St85} to potentials  $V\in  L^{d/2,\infty}$ under the assumption that  $\|V\|_{L^{d/2,\infty}}$ is small enough. 
Later,  Wolff \cite{W92} showed that the smallness assumption is indispensable if  $V\in L^{ d/2,\infty}$. 
Here,  $\|\cdot\|_{p,r}$ denotes the  norm of  the  Lorentz space $L^{p,r}(\mathbb R^d)$ (for example, see \cite{St71}).

By the aforementioned results due to Escauriaza \cite{e00} and Escauriaza and Vega \cite{ev01} the \sucp for \eqref{diff_ineq} is known when 
$t^{1-d/(2\mathfrak r)-1/\mathfrak s}V(x,t)\in L_{t,loc}^{\mathfrak s}L_x^{\mathfrak r}$ and $\mathfrak r,\,\mathfrak s$ satisfy 
 \Be 
 \label{rs-con}
 d/(2\mathfrak r)+1/\mathfrak s\le  1, \quad  1 \le \mathfrak r, \mathfrak s \le\infty. 
 \Ee
However, in view of those results concerning  the (abovementioned) \sucp for the Laplacian,    it seems  natural to  expect that  the class of potentials for which the \sucp for \eqref{diff_ineq} holds  can be further expanded to certain weak spaces.  
 
In this paper, we extend the results in  \cite{e00, ev01} to a  larger class of potentials,  that is to say, $t^{1-d/(2 \mathfrak r)-1/\mathfrak s}V(x,t)\in L_{t,loc}^{\mathfrak s}L_x^{\mathfrak r,\infty}$, $d/(2\mathfrak r)+1/\mathfrak s\le  1$, $d/2\le\mathfrak  r \le\infty$.  
As in \cite{St85} our result is a consequence of  new Carleman estimates for the heat operator in the Lorentz spaces. 

\subsubsection*{Carleman estimate} 
We denote $L^s_tL_x^{q,b}=L^s_t(\R_+; L_x^{q,b}(\mathbb R^d))$.  Then, we consider the Carleman  inequality for the heat operator of the form
\begin{equation}\label{carl}
\begin{aligned}
\Big\| &t^{-\alpha}e^{-\frac{|x|^2}{8t}}g\Big\|_{L^s_tL_x^{q,b}} \le  C\Big\|t^{-\alpha +1-\frac d2(\frac1p-\frac1q)-(\frac1r-\frac1s)} e^{-\frac{|x|^2}{8t}}(\Delta +\partial_t) g\Big\|_{L^r_t L_x^{p,a}}
\end{aligned}
\end{equation}
with $C$ independent of $\alpha$, which holds  for  $g\in \mathrm C_c^\infty(\mathbb R^{d+1}\setminus\{(0,0)\})$  under a suitable condition on the exponents $\alpha, p, q,r,s, a$ and $b$.
For  $\alpha \in \mathbb R$, we set \[ \beta= \beta(\alpha, q,s)=2\alpha -\frac d{q}-\frac 2s.\]
The estimate \eqref{carl} was formerly considered with  $p=a,q=b$.  It was Escauriaza \cite{e00} who first obtained the estimate \eqref{carl} for some $p=a,q=b,r,s$.  
 More precisely,  he showed  that the estimate \eqref{carl} holds with  the Lebesgue spaces (i.e., $a=p$, $b=q$) for $p,q$ satisfying $q=p'$ and $1/p-1/q<2/d$ if $d\ge2$, and $1/p-1/q\le 1$ if $d=1$  provided that   
\[
 \dist(\beta, \mathbb N_0)\ge c
\]
  for some $c>0$ where $ \N_0  := \N\cup \{0\}$.  Subsequently, the estimate \eqref{carl} was  extended  by Escauriaza and Vega \cite{ev01} to the exponents $p,q$ which lie outside of the line of duality. They obtained the estimate \eqref{carl} for ${2d}/{(d+2)}\le p\le 2\le q\le {2d}/{(d-2)}$  if $d\ge3$,   and  for $1\le p\le 2\le q\le\infty$, $(p,q,d)\neq (1,\infty,2)$ for $d=1,2$.

We extend the previously known results not only to  Lorentz spaces but also on a wider range of exponents $p,q,r,s$.  To present our result, for $d\ge 3$ we define $\mathscr A=  \mathscr A(d)$, $\mathscr B=  \mathscr B(d)$, $\mathscr D=  \mathscr D(d)\in [1/2,1]\times [0,1/2]$ by setting 
\begin{align*}
\mathscr A&=\left(\frac{d+2}{2d},\frac 12\right),\; 
\quad \mathscr B=\left(\frac{d^2+2d-4}{2d(d-1)},\frac{d-2}{2(d-1)}\right),  \quad  \mathscr D=\left(\frac{d+2}{2d},\frac{d-2}{2d}\right).
\end{align*}
By  $\mathfrak T$ we denote the closed pentagon with vertices $(\frac12,\frac12)$, $\mathscr A$, $\mathscr B$, $\mathscr B'$, and $\mathscr A'$ from which the two vertices $\mathscr B$ and $\mathscr B'$ are removed. Here,  $X'=(1-b,1-a)$  (the dual point)  if $X=(a,b)$.   See Figure \ref{fig:uniform}.

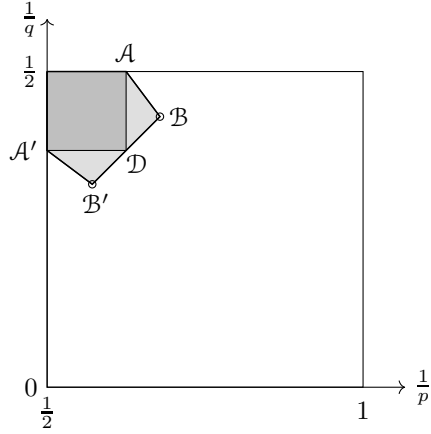
\begin{figure}[t]
\centering
\begin{tikzpicture}[scale=0.7]
\draw[<->] (0,7) node[left]{$\frac1q$}--(0,0)--(6.8,0)node[right]{$\frac1p$};
\draw (0,0) rectangle (6,6);
\filldraw[fill=gray!25](0,6)--(0,4.5)--(6/7,27/7) --(15/7,36/7)--(1.5,6);
\filldraw[fill=gray!50](0,6)--(0,4.5)--(1.5, 4.5)--(1.5,6);
\draw [line width=0.2mm]  (0,6)--(0,4.5)--(6/7,27/7) --(15/7,36/7)--(1.5,6)--(0,6); 
\node[below] at (1.7, 4.6) {$\mathscr D$};
\node[left] at (0,4.5) {$\mathscr A'$}; \node[above] at (1.5,6) {$ \mathscr A$}; 
\draw (6/7,27/7) circle [radius=0.07]; \node[below] at (6.5/7,27/7) {$\mathscr B'$};
\draw (15/7,36/7) circle [radius=0.07]; \node[right] at (15/7,36/7) {$\mathscr B$}; 
\node[left] at (0,0) {$0$};\node[below] at (0,0) {$\frac12$};
\node[left] at (0,6) {$\frac12$};\node[below] at (6,-0.1) {$1$};
\end{tikzpicture}
\caption{The region of $p,q$ for which  \eqref{carl} holds when $d\ge 3$: The dark grayed square stands for 
the earlier result due to  Escauriaza and Vega \cite{ev01}, and  the light grayed region for the newly extended range. 
}
\label{fig:uniform}
\end{figure}

\begin{thm}\label{thm_carl}
Let $d\ge 3$ and $(1/p,1/q)\in \mathfrak T$. Let $1\le r\le s\le\infty$ satisfy $(\frac1r,\frac1s)\neq (1,\frac{d}2(\frac1p-\frac1q)),$ $(1-\frac d2(\frac1p-\frac1q),0)$, and
\begin{equation}\label{ca_co2}
0\le \frac1r-\frac 1s \le 1-\frac d2\big(\frac1p-\frac1q\big).
\end{equation} 
Suppose $\beta \notin \mathbb N_0$. Then,  if $\frac1p-\frac1q<\frac2d$, $p\neq 2$, and $q\neq 2$,   the  estimate \eqref{carl} 
 holds for $1\le a=b\le \infty$ with $C$ depending only on $p,q,a,b, r,s,$ and $\dist(\beta,\mathbb N_0)$$;$  if 
 \begin{equation}\label{critical}
\frac1p-\frac1q=\frac2d, 
\end{equation} 
the  same estimate \eqref{carl} 
 holds for $ a=b=2$.   
\end{thm}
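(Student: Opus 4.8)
The plan is to recast \eqref{carl} as a space--time inequality for a heat-type operator modeled on the Hermite operator $\mathcal H=-\Delta+|x|^2$, and then to reduce matters to uniform resolvent bounds for $\mathcal H$ in Lorentz spaces. Put $u=e^{-|x|^2/(8t)}g$, rescale $x=\sqrt t\,y$, and set $\tau=-\tfrac12\log t$. Conjugating $\partial_t+\Delta$ by the Gaussian weight produces $\partial_t+\Delta+\tfrac{x}{2t}\cdot\nabla+\tfrac d{4t}-\tfrac{|x|^2}{16t^2}$; since the rescaling is tuned to $|x|^2/t$, after the substitutions all first-order terms cancel and $e^{-|x|^2/(8t)}(\partial_t+\Delta)g$ becomes $-\tfrac1{2t}(\partial_\tau+\mathcal H_\ast)U$, where $U(y,\tau)=u(\sqrt t\,y,t)$ and $\mathcal H_\ast$ is a dilated, shifted copy of $\mathcal H$ whose spectrum is $\mathbb N_0$. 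Tracking the Jacobians of the substitutions, the two sides of \eqref{carl} turn, up to constants, into $\|e^{\be\tau}U\|_{L^s_\tau L^{q,b}_y}$ and $\|e^{\be\tau}(\partial_\tau+\mathcal H_\ast)U\|_{L^r_\tau L^{p,a}_y}$, with $\be=2\alpha-\tfrac dq-\tfrac2s$ emerging exactly as the leftover power of $t$. Writing $W=e^{\be\tau}U$ and $L=\mathcal H_\ast-\be$ (an affine image of $\mathcal H$ with spectrum $\mathbb N_0-\be$), the estimate \eqref{carl} is \emph{equivalent} to
\[
\|W\|_{L^s_\tau L^{q,b}_y}\le C\,\|(\partial_\tau+L)\,W\|_{L^r_\tau L^{p,a}_y}.
\]
Since $\alpha$ no longer appears, the claimed $\alpha$-independence of $C$ is automatic, and $\be\notin\mathbb N_0$ says precisely that $0\notin\operatorname{spec}(L)$, with $\dist\!\big(0,\operatorname{spec}(L)\big)=\dist(\be,\mathbb N_0)$.

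\emph{The Hermite resolvent estimate.}
Fourier-transforming in $\tau$, the displayed inequality is governed by the family of resolvents $(L+i\sigma)^{-1}$, $\sigma\in\R$, i.e.\ by $(\mathcal H-z)^{-1}$ for $z$ on a horizontal line of distance $\sim\dist(\be,\mathbb N_0)$ from $\operatorname{spec}(\mathcal H)=2\mathbb N_0+d$, and the core estimate is
\[
\big\|(\mathcal H-z)^{-1}\big\|_{L^{p,a}\to L^{q,b}}\lesssim|z|^{\frac d2(\frac1p-\frac1q)-1},\qquad a=b,
\]
valid for $(1/p,1/q)\in\mathfrak T$, $p\neq2\neq q$ in the range $\tfrac1p-\tfrac1q<\tfrac2d$ and all $1\le a=b\le\infty$, and with $a=b=2$ and constant independent of $|z|$ on the critical line $\tfrac1p-\tfrac1q=\tfrac2d$. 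We derive this from $(\mathcal H-z)^{-1}=\sum_{k\ge0}(2k+d-z)^{-1}\Pi_k$, where $\Pi_k$ is the $k$-th Hermite spectral projection, together with its sharp $L^p$--$L^q$ (and, refined, Lorentz) cluster bounds --- the Hermite counterpart of Sogge's spectral projection estimates. The series cannot be summed in absolute value (that diverges), so one isolates the $O(1)$ terms with $|2k+d-\re z|\lesssim1$ --- controlled by a single cluster estimate and the gap $\dist(\be,\mathbb N_0)$ --- and treats the remainder by exploiting the oscillation of $(2k+d-z)^{-1}$, i.e.\ by Kenig--Ruiz--Sogge-type arguments for the Hermite resolvent, equivalently by estimating the Hermite (Mehler) semigroup at complex times. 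The region $\mathfrak T$ with $p\neq2\neq q$ is exactly where this works for every $a=b$; at $p=2$ or $q=2$ the pertinent $L^2$-cluster bound degrades logarithmically, and on the critical line the remainder sum loses a logarithm and must be replaced by an $L^2$-orthogonality ($TT^\ast$) argument, which yields only $L^{p,2}\to L^{q,2}$ and forces removal of the two vertices $\mathscr B,\mathscr B'$.

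\emph{The time variable and assembly.}
Modulo the finitely many modes of $L$ below $\be$ --- a finite-rank, hence harmless, contribution --- the inverse of $\partial_\tau+L$ is the retarded operator $W\mapsto\int_{-\infty}^{\tau}e^{-(\tau-\tau')L_+}W(\tau')\,d\tau'$, where $L_+$ is $L$ restricted to its positive part, and $\|e^{-\rho L_+}\|_{L^{p,a}\to L^{q,a}}\lesssim\rho^{-\frac d2(\frac1p-\frac1q)}$ for small $\rho>0$ with exponential decay as $\rho\to\infty$. Dominating pointwise by this scalar kernel and invoking Hardy--Littlewood--Sobolev (fractional integration) in $\tau$ --- with the Christ--Kiselev lemma to pass to the retarded operator when $r<s$ --- yields the $L^r_\tau L^{p,a}_y\to L^s_\tau L^{q,b}_y$ bound throughout $0\le\tfrac1r-\tfrac1s\le1-\tfrac d2(\tfrac1p-\tfrac1q)$; the two excluded pairs $(\tfrac1r,\tfrac1s)=(1,\tfrac d2(\tfrac1p-\tfrac1q))$ and $(1-\tfrac d2(\tfrac1p-\tfrac1q),0)$ are precisely the Hardy--Littlewood--Sobolev endpoints, where the scheme breaks down. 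On the critical line $\tfrac1r=\tfrac1s$ is forced and the kernel is $\sim\rho^{-1}$, so the scalar argument fails; there one runs the same reduction but with the $L^{p,2}\to L^{q,2}$ bound of the previous step, which is why only $a=b=2$ survives.

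\emph{Main obstacle.}
The crux is the Lorentz-space resolvent bound for the Hermite operator \emph{uniformly up to its spectrum}. Away from the spectrum $\mathcal H$ behaves like $-\Delta$, but its eigenfunctions concentrate simultaneously near the origin and near the turning sphere $|x|\sim\sqrt{2k+d}$, and it is the interplay of these two regimes --- reflected in the shape of $\mathfrak T$ and the distinguished points $\mathscr A,\mathscr B,\mathscr D$ --- that makes the endpoint behavior subtle, rules out a soft summation of the cluster bounds, and accounts for the exclusions $p\neq2$, $q\neq2$, the removal of $\mathscr B,\mathscr B'$, and the restriction to $a=b=2$ on the critical line.
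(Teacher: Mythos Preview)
Your change of variables and conjugation reducing \eqref{carl} to an estimate for $(\partial_\tau+L)^{-1}$, with $L$ a shifted Hermite operator, is correct and is exactly the reduction the paper carries out. In the subcritical range $\gamma:=\tfrac d2(\tfrac1p-\tfrac1q)<1$ your semigroup representation, with the pointwise kernel bound $\rho^{-\gamma}$ for small $\rho$ and exponential decay at infinity, followed by Young and Hardy--Littlewood--Sobolev in $\tau$, is the physical-space twin of the paper's argument (which bounds the Fourier-side kernel $K_\beta(t)$ by $\min(|t|^{-\gamma},|t|^{-2})$ using the resolvent estimates for $m=1,2,3$ and then applies the same convolution inequalities). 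Both produce the same range of $r,s$ with the same excluded endpoints. The Christ--Kiselev lemma is a red herring: your retarded operator is already a convolution, so no truncation argument is needed.

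The genuine gap is the critical case $\gamma=1$, where the theorem requires the estimate for \emph{all} $1<r=s<\infty$. You correctly observe that the kernel is $\sim\rho^{-1}$, so fractional integration fails, but ``run the same reduction with the $L^{p,2}\to L^{q,2}$ bound'' is not an argument: a uniform bound $\|(L+i\sigma)^{-1}\|_{L^{p,2}\to L^{q,2}}\lesssim1$ cannot be fed into Plancherel in $\tau$ because $L^{q,2}$ is not a Hilbert space, and nothing you wrote reaches $r\neq2$. The paper closes this by treating $S_\beta$ as an operator-valued singular integral: one verifies the H\"ormander condition $\int_{|t|>2|s|}\|K_\beta(t-s)-K_\beta(t)\|_{L^{p,2}\to L^{q,2}}\,dt\le A$ via the bound $\|K_\beta'(t)\|\lesssim|t|^{-2}$ (this is precisely where the $m=3$ resolvent estimate is used), and then vector-valued Calder\'on--Zygmund theory reduces matters to $r=s=2$. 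That base case is itself nontrivial in mixed Lorentz norms and is proved through a Littlewood--Paley decomposition in $t$ together with a square-function inequality in $L^2_tL^{p,2}_x$; it is this detour, not the resolvent step, that forces $a=b=2$. Two smaller points: your $TT^\ast$/orthogonality sketch for the resolvent on the critical line reaches only the single point $\mathscr D$, not the open segment $(\mathscr B,\mathscr B')$ the theorem needs---the paper gets the full segment from the formula $\Pi_k=\tfrac1{2\pi}\int_{-\pi}^\pi e^{i\frac t2(2k+d-H)}\,dt$, the endpoint Strichartz estimate for $e^{-itH}$, and a summation-by-parts argument; and the exclusion $p\neq2$, $q\neq2$ has nothing to do with logarithmic losses in cluster bounds---it is needed so that real interpolation of the Lebesgue-space resolvent estimates can produce $L^{p,a}\to L^{q,a}$ with an arbitrary second index $a$.
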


It is remarkable that  Theorem  \ref{thm_carl} gives \eqref{carl} for $(1/p,1/q)$ contained in the open line segment $(\mathscr B, \mathscr B')$ (see Figure \ref{fig:uniform}).   
The exponents $p,q$ satisfying  \eqref{critical} 
constitute the critical case 
in 
that \eqref{carl} is no longer true if $1/p-1/q>2/d$.   (See Remark \ref{failure} and the condition \eqref{ca_co2}.) 
Consequently, it is more difficult  to obtain the estimate \eqref{carl} for $p,q$ satisfying \eqref{critical} than that  for $p,q$ satifying $1/p-1/q<2/d$. 
Only the estimate \eqref{carl} with $(1/p,1/q)=\mathscr D$, $a=p$, and $b=q$ was previously shown by Escauriaza and Vega \cite{ev01}. 

If  $p,q$ satisfy \eqref{critical} and $r=s$, then   the  estimate \eqref{carl}   implies the Carleman inequality for the Laplacian (see \cite{ev01}):
\begin{equation}\label{carl_el} \| |x|^{-\sigma} f\|_{L^{q,b}} \le C \||x|^{-\sigma} \Delta f\|_{L^{p,a}},\quad f\in \mathrm C_c^\infty(\R^d\setminus \{0\})\end{equation}
for $\sigma>0$ with $C>0$ depending on $d, p,q$ and $\dist(\sigma, \mathbb N+\frac dq)$ if $\dist(\sigma, \mathbb N+\frac dq)>0.$   
By this implication  the estimates in Theorem \ref{thm_carl} with $p,q$ satisfying \eqref{critical} gives \eqref{carl_el} for $p,q$ satisfying $\ppq\in (\mathscr B, \mathscr B')$.
However, it does not extend the previously known range of $p,q$ for which  \eqref{carl_el}  holds.   When  $d\ge 5,$ 
the range of $p,q$ coincides with that in Kwon and Lee \cite{KL18}, which was obtained by making use of  
the sharp estimate for the spherical harmonic projection.  The optimal range  for the estimate \eqref{carl_el} still remains open.

To obtain \sucp for potentials in  $L_{t,loc}^sL_x^{r,\infty}$ we need to obtain \eqref{carl} with $a=b$. To this end, we  are basically relying on real interpolation to upgrade 
$L^r_tL_x^{p}$--$L^s_tL_x^{q}$ estimates to  these of $L^r_tL_x^{p,a}$--$L^s_tL_x^{q,b}$ with $a=b$.  However,  such extension of the Carleman inequality \eqref{carl}  to  the Lorentz spaces is not so straightforward as in \cite{St85} since  real interpolation  does not behave well in mixed norm spaces (see \cite{cw}).  In particular, 
we are only able to  obtain  \eqref{carl}  with $a=b=2$ when $p,q$ satisify \eqref{critical} (also see Lemma \ref{carl_pf}).

\subsubsection*{Strong unique continuation property for the heat operator} 
The extension of the Carleman estimate to the Lorentz spaces  (Theorem \ref{thm_carl}) allows 
a larger class of potentials for the  strong unique continuation property  for the heat operator.  
In this regard we obtain Theorem \ref{co_suc} and Theorem \ref{co_suc1} below  which  improve  the results in  \cite{ev01}.  
Once we have the Carleman estimate \eqref{carl}, Those theorems  can be shown by routine adaptation of the argument  in \cite{ev01}. 
So, we state them without providing  proofs.  

\begin{thm}\label{co_suc} Let $d\ge3$, $0<T<\infty,$  and 
$\mathfrak r,\mathfrak s$ satisfy \eqref{rs-con}. Let $\ppq\in \mathfrak T$  satisfy $1/p-1/q=1/\mathfrak r$.  Suppose that  $u\in W^{1,a}((0,T); W^{2,p}(\R^d))$, $a\le \min\{2,\mathfrak s\}$, is a  solution to the differential inequality \eqref{diff_ineq}
and suppose that for any $k\in \mathbb N$  there is a constant $C_k$ such that \eqref{parabolic-decay} holds for some $\delta>0$. 
Then $u$ is identically zero on $\R^d\times (0,T)$ provided that  $\| t^{1-\frac d{2\mathfrak r}-\frac1{\mathfrak s}}V \|_{L^{\mathfrak s}((0,T); L_x^{\mathfrak r,\infty}(\R^d))}$ is small enough. 
\end{thm}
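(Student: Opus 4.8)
The proof runs the Carleman-estimate scheme of Escauriaza and Vega \cite{ev01} with the Lorentz-space inequality of Theorem \ref{thm_carl} in place of their Lebesgue estimate; below I indicate only the places where the Lorentz exponents intervene and refer to \cite{ev01} for the rest. After a translation we may assume that \eqref{parabolic-decay} forces $u$ to vanish to infinite order at the spatial origin at time $0$. The plan is to apply \eqref{carl} with $r=a$ to $g=\eta u$, where $\eta=\eta(x,t)$ is a smooth cutoff equal to $1$ on a small parabolic neighborhood of the origin, supported in a slightly larger one, and augmented if necessary by a cutoff at a large spatial scale. The choice $r=a$ is admissible: taking $s$ with $\frac1r-\frac1s=\frac1{\mathfrak s}$ (needed below for Hölder in $t$) makes $(\frac1r,\frac1s)$ satisfy \eqref{ca_co2} precisely by hypothesis \eqref{rs-con}, forces $s\ge r$, and $a\le\min\{2,\mathfrak s\}$ is exactly what keeps $(\frac1r,\frac1s)$ off the two excluded points and, when \eqref{critical} holds, secures $a=b=2$. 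Since \eqref{carl} is stated for $g\in\mathrm C_c^\infty(\R^{d+1}\setminus\{0\})$, one mollifies $u$ in the space-time variables, applies \eqref{carl} to $\eta u_\varepsilon$, and passes to the limit using $u\in W^{1,a}((0,T);W^{2,p}(\R^d))$ on the support of $\nabla\eta$; the parabolic decay \eqref{parabolic-decay}, against the Gaussian weight $e^{-|x|^2/8t}$, makes every weighted norm below finite (spatial decay from \eqref{parabolic-decay}, integrability near $t=0$ from infinite-order vanishing, for every $\alpha$).

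The one step that is not purely formal is the treatment of the right-hand side of \eqref{carl}. Writing $(\Delta+\partial_t)(\eta u)=\eta(\Delta+\partial_t)u+2\nabla\eta\cdot\nabla u+\big((\Delta+\partial_t)\eta\big)u$ and using $|\eta(\Delta+\partial_t)u|\le\eta V|u|$ from \eqref{diff_ineq}, I bound the potential contribution by Hölder in the Lorentz scale. With $\frac1p-\frac1q=\frac1{\mathfrak r}$, $\frac1r-\frac1s=\frac1{\mathfrak s}$, $a=b$, and the product inclusion $L^{\mathfrak r,\infty}_x\cdot L^{q,b}_x\hookrightarrow L^{p,a}_x$ (valid for $a\ge b$, since every $p$ with $\pq\in\mathfrak T$ satisfies $1<p<\infty$), one gets
\[
\Big\|t^{-\alpha+1-\frac d{2\mathfrak r}-\frac1{\mathfrak s}}e^{-\frac{|x|^2}{8t}}\eta V u\Big\|_{L^r_tL^{p,a}_x}\le\big\|t^{1-\frac d{2\mathfrak r}-\frac1{\mathfrak s}}V\big\|_{L^{\mathfrak s}_tL^{\mathfrak r,\infty}_x}\,\Big\|t^{-\alpha}e^{-\frac{|x|^2}{8t}}\eta u\Big\|_{L^s_tL^{q,b}_x},
\]
the powers of $t$ matching because $1-\tfrac d2(\tfrac1p-\tfrac1q)-(\tfrac1r-\tfrac1s)=1-\tfrac d{2\mathfrak r}-\tfrac1{\mathfrak s}$. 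The last factor is the left-hand side of \eqref{carl} for $g=\eta u$; since $\|t^{1-d/(2\mathfrak r)-1/\mathfrak s}V\|_{L^{\mathfrak s}((0,T);L^{\mathfrak r,\infty})}$ is small and the constant in \eqref{carl} is independent of $\alpha$, this term is absorbed into the left-hand side.

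After absorption, \eqref{carl} yields, with $C$ independent of $\alpha$,
\[
\Big\|t^{-\alpha}e^{-\frac{|x|^2}{8t}}\eta u\Big\|_{L^s_tL^{q,b}_x}\le C\Big\|t^{-\alpha+1-\frac d{2\mathfrak r}-\frac1{\mathfrak s}}e^{-\frac{|x|^2}{8t}}\big(2\nabla\eta\cdot\nabla u+((\Delta+\partial_t)\eta)u\big)\Big\|_{L^r_tL^{p,a}_x}.
\]
The right-hand side is supported on $\{\nabla\eta\ne0\}\cup\{\partial_t\eta\ne0\}$, which one arranges to lie in $\{t\ge t_1\}$ for a fixed $t_1>0$; restricting the left-hand side to a region near the origin with $t\le t_0<t_1$ retains there a factor $\gtrsim t_0^{-\alpha}$, so dividing and letting $\alpha\to\infty$ (with everything finite thanks to \eqref{parabolic-decay}) and using $(t_0/t_1)^\alpha\to0$ forces $u\equiv0$ on a parabolic neighborhood of the origin. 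The passage from this to $u\equiv0$ on $\R^d\times(0,T)$ is the propagation argument of \cite{ev01} and again uses \eqref{parabolic-decay}; I would simply quote it.

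The crux — and the reason Theorem \ref{thm_carl} had to be established with $a=b$ rather than for general $(a,b)$ — is exactly the Lorentz Hölder inequality used above: $L^{\mathfrak r,\infty}_x\cdot L^{q,b}_x\hookrightarrow L^{p,a}_x$ is available only for $a\ge b$, so closing the absorption, which requires the same second Lorentz index on the two sides of \eqref{carl}, demands $a=b$, and in the critical regime \eqref{critical} Theorem \ref{thm_carl} furnishes only $a=b=2$; this is the origin of the hypothesis $a\le\min\{2,\mathfrak s\}$. The remaining points — the density reduction to smooth compactly supported functions, and the membership of the cutoff commutators in $L^r_tL^{p,a}_x$ (from the assumed $W^{1,a}(W^{2,p})$ regularity together with parabolic interior regularity away from the origin) — are routine once \eqref{parabolic-decay} is in hand, as in \cite{ev01}.
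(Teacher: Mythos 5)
The paper deliberately omits the proof of Theorem \ref{co_suc}, saying only that it is a routine adaptation of \cite{ev01} once \eqref{carl} is available; your reconstruction is exactly that adaptation, and you correctly isolate the one genuinely new ingredient, namely that O'Neil's inequality $L^{\mathfrak r,\infty}_x\cdot L^{q,b}_x\hookrightarrow L^{p,a}_x$ requires $a\ge b$, which is why \eqref{carl} had to be established with equal Lorentz indices (and with index $2$ in the critical case). One caveat: your parenthetical claim that $a\le\min\{2,\mathfrak s\}$ ``is exactly what keeps $(\frac1r,\frac1s)$ off the two excluded points and \dots secures $a=b=2$'' conflates the time-regularity exponent of $W^{1,a}$ with the second Lorentz index of \eqref{carl} and is not literally correct: with $r=a$ and $\frac1s=\frac1a-\frac1{\mathfrak s}$ the excluded points of Theorem \ref{thm_carl} are hit exactly when equality holds in \eqref{rs-con} and $a=1$ or $a=\mathfrak s$, so those borderline choices need separate care (e.g.\ taking $r$ strictly between $1$ and $a$), while in the critical case the Lorentz indices are simply chosen equal to $2$ independently of $a$.
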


Most significantly, Theorem  \ref{co_suc} gives  the \sucp 
with $V\in L^{\infty}((0,T); L_x^{d/2,\infty}(\R^d))$. This strengthens the result obtained by 
Escauriaza and Vega \cite{ev01}  under the assumption that    $\|V \|_{L^{\infty}((0,T); L_x^{d/2})}$ is small enough.  Using Wolff's construction in  \cite{W92} we can show that the smallness assumption is necessary in general for $V\in L^{\infty}((0,T); L^{d/2,\infty}(\R^d))$, or $V\in  L^{d/2,\infty}(\R^d; L^{\infty}((0,T)))$.  Indeed,  Wolff showed that there is  a bounded nonzero function $w$  such that  $|\Delta w|\le |V_\ast w|$ with $V_\ast\in L^{d/2,\infty}$ and  vanishes to infinite order at the origin.  Since the function $w$ in \cite{W92}  is bounded, considering the time independent function $u(x,t):=w(x)$  it is easy to see that $u(x,t)$ satisfies \eqref{parabolic-decay} and  obviously the differential inequality $|\Delta u+\partial_t u|\le |V_\ast u|$.   

We also have the following  \sucp result for a local solution.

\begin{thm}\label{co_suc1} Let $d\ge 3$ and $\mathfrak r,\mathfrak s$ satisfy \eqref{rs-con}. 
Suppose that $u$ is a continuous solution to $|\Delta u+\partial_t u|\le |Vu|$ on $B(0,2)\times (0,2)$ and suppose that  
 for any $ k\in \mathbb N$ there is a constant $C_k$ such that
\[ \|e^{-|x|^2/8t} u\|_{L^2((0,\varepsilon );L_x^2(B(0,2)))} \le C_k \varepsilon^k,\quad 0<\varepsilon<2 . \]
Then $u(x,0)$ vanishes on $B(0,2)$ if $\|t^{1-\frac d{2 \mathfrak r}-\frac1{\mathfrak s}} V\|_{L^{\mathfrak s}((0,2); L_x^{\mathfrak r,\infty}(B(0,2)))}$   is small enough. 
\end{thm}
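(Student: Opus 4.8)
The plan is to deduce Theorem~\ref{co_suc1} from the Lorentz Carleman inequality \eqref{carl} of Theorem~\ref{thm_carl} by running the Escauriaza--Vega argument of \cite{ev01}; the decisive new input is that \eqref{carl} now holds in the mixed Lorentz scale $L^r_tL_x^{p,a}\to L^s_tL_x^{q,b}$ with equal secondary indices $a=b$, which is precisely what allows the potential term to be absorbed when $V$ lies only in a weak space. First I would fix, using \eqref{rs-con} and $d\ge3$, a pair $\ppq\in\mathfrak T$ with $1/p-1/q=1/\mathfrak r$ and exponents $1\le r\le s\le\infty$ with $1/r-1/s=1/\mathfrak s$ satisfying the hypotheses of Theorem~\ref{thm_carl}, taking $a=b$, and $a=b=2$ in the critical case $1/\mathfrak r=2/d$. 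Next I would localize: choose $\eta\in\mathrm C_c^\infty(B(0,2))$ with $\eta\equiv1$ on $B(0,\rho_1)$ and $\zeta\in\mathrm C_c^\infty((0,2))$ with $\zeta\equiv1$ on $(0,1)$, and set $g=\eta\zeta u$. Because $u$ is only assumed continuous, I would first invoke interior parabolic regularity for $|\Delta u+\partial_t u|\le|Vu|$ --- available since $d/(2\mathfrak r)+1/\mathfrak s\le1$ --- to upgrade the hypothesized $L^2_{x,t}$ decay of $e^{-|x|^2/8t}u$ as $t\to0$ to decay in the $L^s_tL_x^{q,b}$ and $L^r_tL_x^{p,a}$ norms on the relevant compact set. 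Then, truncating $\zeta$ to $\{t>\mu\}$ and mollifying $g$ in $x$, I obtain admissible $g_\mu\in\mathrm C_c^\infty(\R^{d+1}\setminus\{(0,0)\})$, the error created at $\{t\sim\mu\}$ being $o(1)$ as $\mu\to0$ by the vanishing hypothesis.

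With this in hand I would apply \eqref{carl} to $g_\mu$ with a parameter $\alpha$ --- eventually $\alpha=\alpha_j\to\infty$ --- chosen so that $\beta=2\alpha-d/q-2/s$ stays at distance $\ge\tfrac12$ from $\N_0$. Expanding
\[
(\Delta+\partial_t)g=\zeta\eta\,(\Delta+\partial_t)u+\zeta\,(2\nabla\eta\cdot\nabla u+u\Delta\eta)+\eta\zeta'u
\]
and using $|(\Delta+\partial_t)u|\le|Vu|$, the main term is estimated by Hölder's inequality in the Lorentz spaces, $\|Vu\|_{L_x^{p,a}}\lesssim\|V\|_{L_x^{\mathfrak r,\infty}}\|u\|_{L_x^{q,a}}$ (this uses $1/p=1/\mathfrak r+1/q$ together with the equal secondary indices $a=b$ supplied by Theorem~\ref{thm_carl}, and in the critical case forces $a=b=2$), followed by Hölder in $t$ with $1/r=1/\mathfrak s+1/s$. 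The powers of $t$ in \eqref{carl} are arranged so that $t^{-\alpha+1-\frac d2(\frac1p-\frac1q)-(\frac1r-\frac1s)}$ splits as $t^{1-\frac d{2\mathfrak r}-\frac1{\mathfrak s}}$ (attached to $V$) times $t^{-\alpha}$ (attached to $e^{-|x|^2/8t}\eta\zeta u$), so the main term is at most $C\|t^{1-\frac d{2\mathfrak r}-\frac1{\mathfrak s}}V\|_{L^{\mathfrak s}((0,2);L_x^{\mathfrak r,\infty}(B(0,2)))}$ times the left-hand side of \eqref{carl}; by the smallness hypothesis on this $V$-norm it can be absorbed.

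After letting $\mu\to0$ and absorbing, I am left with
\[
\Big\|t^{-\alpha}e^{-\frac{|x|^2}{8t}}\eta\zeta u\Big\|_{L^s_tL_x^{q,b}}\le C\,\Big\|t^{-\alpha+\gamma_0}e^{-\frac{|x|^2}{8t}}\big(\zeta[\Delta,\eta]u+\eta\zeta'u\big)\Big\|_{L^r_tL_x^{p,a}},\qquad\gamma_0=1-\tfrac d{2\mathfrak r}-\tfrac1{\mathfrak s}.
\]
Here $\zeta'$ is supported in $\{t\sim1\}$, where $t^{-\alpha+\gamma_0}$ is bounded uniformly in $\alpha$, and $[\Delta,\eta]u$ is supported in the annulus $\{\rho_1\le|x|\le2\}$, where $e^{-|x|^2/8t}\le e^{-\rho_1^2/8t}$; thus the right-hand side only sees the Carleman weight away from $(0,0)$. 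On the other hand the left-hand side sees the much larger weight $t^{-\alpha}e^{-|x|^2/8t}$ on a parabolic neighbourhood of the origin on which $\eta\zeta\equiv1$, and it is finite thanks to the vanishing hypothesis. Letting $\alpha_j\to\infty$ and arguing exactly as in \cite{ev01}, the right-hand side becomes negligible relative to the left, which forces $u\equiv0$ on a neighbourhood of $(0,0)$; a standard connectedness argument --- sliding the supports of $\eta$ and $\zeta$ and using that $|\Delta u+\partial_t u|\le|Vu|$ holds on all of $B(0,2)\times(0,2)$ --- then propagates this to $u(x,0)=0$ for every $x\in B(0,2)$.

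I expect the only genuinely delicate step to be the passage to Lorentz spaces in the absorption argument: since Hölder's inequality and real interpolation are badly behaved in mixed-norm spaces, the argument closes only when $a=b$, and in the critical regime $1/\mathfrak r=2/d$ only when $a=b=2$, so one is forced to use Theorem~\ref{thm_carl} at the very edge of its range. A secondary technical nuisance is to make the $L^2_{x,t}$ vanishing hypothesis rigorously yield finiteness of the weighted $L^s_tL_x^{q,b}$ and $L^r_tL_x^{p,a}$ norms entering \eqref{carl} (via parabolic smoothing), and to select the sequence $\alpha_j\to\infty$ avoiding $\N_0$ while keeping the error terms uniformly controlled. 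Beyond these points the proof is the routine adaptation of \cite{ev01} announced in the text.
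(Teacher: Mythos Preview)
Your proposal is correct and matches the paper's own treatment: the paper does not give a proof of Theorem~\ref{co_suc1} but explicitly states that, once the Lorentz Carleman estimate \eqref{carl} of Theorem~\ref{thm_carl} is available, the result follows by ``routine adaptation of the argument in \cite{ev01}''. What you have written is precisely that adaptation, and you have correctly isolated the one nontrivial point---that the absorption step via H\"older in Lorentz spaces requires the equal secondary indices $a=b$ (and $a=b=2$ in the critical case $1/\mathfrak r=2/d$) supplied by Theorem~\ref{thm_carl}.
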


\subsubsection*{Uniform resolvent estimate for the Hermite operator}  
We now consider the  resolvent estimate for the Hermite operator  $H=-\Delta+|x|^2$  in $\mathbb R^d$: 
\begin{equation}
\label{eq_resol}
\|(H-z)^{-1} f \|_{q}\le C\|f\|_p,\quad z\in \mathbb C\setminus (2\mathbb N_0+ d)
\end{equation}
with a constant $C$ independent of $z$.  The estimate  has  independent interest while it plays an important role in proving Theorem \ref{thm_carl} (see Lemma \ref{carl_pf}). 
Since  $H$ has the discrete  spectrum $2 \N_0 +d$, $z\in  2\mathbb N_0+ d$ are excluded. 
 In contrast with the operator with a continuous spectrum, it is impossible for \eqref{eq_resol} to hold with $C$ independent of $z$, so we need to impose 
the assumption that 
\Be\label{eq:z-dist} \dist( z, 2\mathbb N_0+d )\ge c\Ee
 for some $1\gg c>0$.  (See Remark \ref{re_gap}). The estimate \eqref{eq_resol} may be compared with the corresponding estimate for the resolvent of  the Laplacian  which is due to Kenig, Ruiz, and Sogge \cite{KRS87}. It was shown in \cite{KRS87} that the estimate 
\[
	\|(-\Delta-z)^{-1}f\|_{q}\le C\|f\|_p,  \quad  z\in \mathbb C\setminus (0,\infty)
\]
holds with $C$ independent of $z$ if and only if $1/p-1/q=2/d$, ${2d}/(d+3)<p <{2d}/(d+1)$ and $d\ge 3$. 
Also, see \cite{JKL16} for the uniform estimates for more general second order differential operators  and \cite{KL19}  for the  sharp bounds which depend on $z$.  
 Under the assumption  \eqref{eq:z-dist},  the uniform resolvent estimate for $H$ continues to hold with $p,q$ away from the critical line $1/p-1/q=2/d$ whereas  this can not be true for $-\Delta$ because of scaling structure  (see \cite{KRS87, KL19}).

The uniform estimate \eqref{eq_resol} was obtained  by 
 Escauriaza and Vega \cite{ev01}  for ${2d}/{(d+2)}\le p\le 2\le q\le {2d}/{(d-2)}$, $d\ge3$.  
However,  $\eqref{eq_resol} $ fails to hold if $1/p-1/q>2/d$ (see Remark \ref{failure}) and  the proof of  \eqref{eq_resol}  is  more involved if  $p,q$ satisfy  \eqref{critical}.
 As for such $(p,q)$ of the critical case  the estimate has been  known only for $(p,q)=({2d}/{(d+2)}, {2d}/{(d-2)})$.   In what follows we establish  \eqref{eq_resol}  for $\ppq \in (\mathscr B, \mathscr B ')$. Those estimates in the expanded range  are crucial for obtaining \eqref{carl}  with
 $a=b$ when $p,q$ satisfy  \eqref{critical}.  

  \begin{thm}\label{thm_resol}
Let $d\ge 3$. Suppose $\ppq\in \mathfrak T$ and  \eqref{eq:z-dist} holds.  
Then, there is a constant $C>0$ such that 
\eqref{eq_resol} holds. Furthermore, if $\ppq=\mathscr B $ or $\mathscr B '$, we have  restricted weak type $($uniform$)$ estimate for $(H-z)^{-1}$.  
\end{thm}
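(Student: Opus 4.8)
The plan is to reduce the uniform resolvent estimate \eqref{eq_resol} to a suitable spectral projection estimate for the Hermite operator and then sum a Littlewood–Paley–type decomposition of the resolvent kernel in the spectral parameter. Write $H = \sum_{k\in\mathbb N_0} (2k+d)\, \Pi_k$ where $\Pi_k$ is the orthogonal projection onto the $k$-th Hermite eigenspace, so that $(H-z)^{-1} = \sum_k (2k+d-z)^{-1}\Pi_k$. The first step is to establish the single-frequency (Hermite spectral cluster) bound $\|\Pi_k f\|_{q} \lesssim (2k+d)^{\rho(p,q)}\|f\|_p$ for $\ppq\in\mathfrak T$, with the sharp exponent $\rho(p,q)$; here one uses the known oscillatory-integral / stationary-phase analysis of the Hermite kernel, splitting the physical space into the bulk region $|x|\lesssim\sqrt{2k+d}$, the transition (Airy) region near $|x|\sim\sqrt{2k+d}$, and the exponentially decaying region, exactly as in the Hermite-eigenfunction literature. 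For the two endpoint vertices $\mathscr B,\mathscr B'$ the correct statement is a restricted weak type bound, obtained by interpolating the pointwise kernel bounds against characteristic functions; this is the source of the "restricted weak type" clause in the theorem.

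Next I would group the frequencies dyadically. Since \eqref{eq:z-dist} forces $\dist(z,2\mathbb N_0+d)\ge c$, we have $|2k+d-z|\gtrsim 1$ for all $k$, and moreover $|2k+d-z|\gtrsim |2k+d-\re z|$ once $2k+d$ is a fixed multiple away from $\re z$. Partition $\mathbb N_0$ into the "near" block $\mathcal N = \{k : |2k+d-\re z|\le 1\}$ (finitely many, in fact $O(1)$, indices) and, for $j\ge 0$, the blocks $\mathcal N_j=\{k: 2^j\le |2k+d-\re z| < 2^{j+1}\}$, each of cardinality $O(2^j)$ (truncated appropriately near $0$ when $\re z<0$). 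On each block one sums the projection bounds: $\big\|\sum_{k\in\mathcal N_j}(2k+d-z)^{-1}\Pi_k f\big\|_q \lesssim 2^{-j}\,\big\|\sum_{k\in\mathcal N_j}\Pi_k f\big\|_q$, and the truncated Hermite multiplier $\sum_{k\in\mathcal N_j}\Pi_k$ is handled again by the kernel analysis, giving an $L^p$–$L^q$ bound of the form $2^{j\,\theta(p,q)}$ for an appropriate $\theta$. Because $\ppq\in\mathfrak T$ lies in (or on the boundary of) the region where the relevant scaling exponent is $\le 1$ — in particular $\frac1p-\frac1q\le\frac2d$, with equality only on the critical segment — the geometric series $\sum_j 2^{-j}2^{j\theta(p,q)}$ converges (the critical line $\mathscr B\mathscr B'$ being precisely where $\theta=1$ and one must instead sum restricted weak type pieces using the Bourgain-style interpolation between blocks). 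The near block $\mathcal N$ contributes a bounded number of terms, each estimated trivially by the single-$k$ bound and $|2k+d-z|^{-1}\lesssim c^{-1}$, which is where the $\dist$-dependence of $C$ enters.

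The main obstacle is the critical case $\frac1p-\frac1q=\frac2d$, i.e. $\ppq$ on the segment joining $\mathscr B$ and $\mathscr B'$ (and its endpoints $\mathscr D$-side behaviour): here the naive dyadic sum is logarithmically divergent at the level of strong-type bounds, so one cannot simply add the block estimates. The remedy is to run a Bourgain-type summation: establish restricted weak type $(p,q)$ bounds for each dyadic block with a gain, interpolate the block operators against each other (using that the blocks act on essentially disjoint frequency ranges), and sum the resulting geometric series in the Lorentz-space setting; this is exactly where the endpoint conclusion degrades to restricted weak type at $\mathscr B,\mathscr B'$. A secondary technical point is uniformity: all implicit constants in the kernel estimates must be shown independent of $z$ (only the fixed gap $c$ is allowed to enter), which requires carrying the parameter $\re z \sim 2k+d$ through the stationary-phase estimates carefully, especially in the transition region. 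Once these block estimates and their summation are in place, Theorem \ref{thm_resol} follows.
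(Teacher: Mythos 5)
Your overall strategy (reduce to spectral projection bounds, decompose dyadically in the spectral parameter, sum) is the natural first attempt, but it has two genuine gaps at exactly the points where the theorem is hard. First, the block estimate $\bigl\|\sum_{k\in\mathcal N_j}(2k+d-z)^{-1}\Pi_k f\bigr\|_q\lesssim 2^{-j}\bigl\|\sum_{k\in\mathcal N_j}\Pi_k f\bigr\|_q$ is asserted but not justified: for $q\neq 2$ you cannot pull the coefficients $(2k+d-z)^{-1}$ out of a Hermite spectral sum by taking their supremum; that requires a multiplier theorem, and the sharp truncation $\mathbf 1_{\mathcal N_j}(k)$ is precisely the kind of multiplier such theorems do not handle. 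If you instead use the triangle inequality together with the uniform bound \eqref{unif}, each block contributes $2^{-j}\cdot\#\mathcal N_j\cdot\|f\|_p\sim\|f\|_p$, and the sum over $j\lesssim\log\re z$ diverges logarithmically --- \emph{everywhere} in $\mathfrak T$, not only on the critical line. So the claimed gain $\theta(p,q)<1$ for the window operator $\sum_{k\in\mathcal N_j}\Pi_k$ off the critical line is the actual crux, and it is a nontrivial ``shrinking spectral window'' estimate that you neither prove nor cite; note that at $p=q=2$ one has $\theta=0$ only via orthogonality, which your scheme does not invoke. Second, on the critical segment $(\mathscr B,\mathscr B')$ --- which is the new content of the theorem --- your remedy is ``Bourgain-type summation of restricted weak type block estimates with a gain,'' but no gain is identified and no pair of geometrically weighted endpoint estimates (which Bourgain's trick requires) is produced. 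Also, your step one, the uniform bound $\|\Pi_k\|_{p\to q}\lesssim 1$ on all of $\mathfrak T$ with restricted weak type at $\mathscr B,\mathscr B'$, is not a routine stationary-phase computation; it is \cite[Theorem 1.2]{JLR21} (Corollary \ref{glo-est} here), which must be cited or reproved.

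For contrast, the paper avoids the dyadic summation entirely. It first reduces, by duality and real interpolation, to a restricted weak type bound at the single point $\mathscr B'$ (the rest of $\mathfrak T$ then follows by interpolating with elementary $L^2$-based estimates as in Corollary \ref{co_uniform}). Writing $z=2(n+\mu)+d+2i\tau$, the resolvent is $G_{\mu,\tau}\bigl(\tfrac{2n+d-H}{2}\bigr)$ and is split with a single smooth cutoff at scale $n$: the far piece is $H^{-1}$ composed with a Marcinkiewicz multiplier (Theorem \ref{sob} plus \cite[Theorem 4.2.1]{Th93}); the near piece is rewritten, via \eqref{eq_proj}, as $\int_{-\pi}^{\pi}\zeta_n(t)e^{-i\frac t2 H}f\,dt$, where the pairing of $\Pi_{n-k}$ with $\Pi_{n+k}$ and a summation by parts (conditions \eqref{g-con22}--\eqref{g-con3}) show $|\zeta_n(t)|\lesssim 1$; the conclusion then follows from the mixed-norm propagator bound of Proposition \ref{st-est} (dispersive estimate, endpoint Strichartz, and Bourgain's summation applied to the \emph{time} decomposition, not the spectral one). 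The cancellation between eigenvalues above and below $\re z$, which your block decomposition discards, is what makes the $m=1$ case work.
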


The proof of the  estimate \eqref{eq_resol} with $(p,q)=({2d}/{(d+2)}, {2d}/{(d-2)})$ 
in Escauriaza and Vega  \cite{ev01}  heavily relies on  the 
uniform bound on the spectral projection operator $\Pi_k$ which is the projection onto the $k$-th eigenspace of the Hermite operator $H$ (see Section 2). 
In fact,  they also used  interpolation along an analytic family of operators which are motivated by Mehler's formula for the Hermite function. However, 
their argument is not enough to prove  \eqref{eq_resol}  for $\ppq\in(\mathscr B,\mathscr B')$. We develop a different approach which is more direct and significantly simpler. 
We make use of a representation formula \eqref{eq_proj} for $\Pi_k$ which was observed  in \cite{JLR21} and an estimate  for the Hermite-Schr\"odinger propagator $e^{-itH}f$ (see Proposition \ref{st-est})
which is a consequence of the representation formula and the endpoint Strichartz estimate \cite{keel-tao}.

{\it Organization of the paper.} The rest of this paper is organized as follows. In Section 2 we provide useful properties of the Hermite operator $H$ and the Hermite spectral projection operator $\Pi_k$.
We prove boundedness of more general multiplier operator for the Hermite operator in Section 3, which implies Theorem \ref{thm_resol}. Finally, the proof of the Carleman estimate for the heat operator is given  in Section 4.

\section{Properties of the Hermite  operator}

 For any multi-index $\alpha\in \N_0^d$ the $L^2$-normalized Hermite function $\Phi_\alpha$ which is a tensor product of one dimensional Hermite functions is an eigenfunction of $H$ with eigenvalue $2|\alpha|+d$. Here $|\alpha|:=\alpha_1+\cdots +\alpha_d$. The 
set  $\{\Phi_\alpha: \alpha\in\N_0^d\}$ forms an orthonormal basis of $L^2(\R^d)$. Thus, for any $f\in L^2(\R^d)$ we have the Hermite expansion     
$f= \sum_\alpha \langle f, \Phi_\alpha\rangle \Phi_\alpha$. 

We consider the Hermite spectral projection operator $\Pi_k$ which is  defined by
\[\Pi_k f= \sum_{\alpha\in\N^d_0 : |\alpha|=k} \langle f, \Phi_\alpha\rangle \,\Phi_\alpha,\quad f\in \mathcal S(\R^d).\]
Then, the Hermite-Schr\"odinger propagator is given by 
\[ e^{-itH}f=\sum_{k\in \N_0} e^{-it(2k+d)}\Pi_k f,\quad f\in \mathcal S(\R^d),\]
which is the solution to the Cauchy problem $(i\partial_t- H)u=0$, $u(x,0)=f(x)$. 
If $f\in \mathcal S(\R^d)$, it is easy to see that $\Pi_k f$ decays rapidly in $k$, thus  $\sum_{k=0}^\infty e^{-it(2k+d)}\Pi_k f$  converges uniformly. 
Clearly, $\Pi_k f = \sum_{k'\in \N_0}\frac1{2\pi} (\int_{-\pi}^\pi e^{it(k-k') } dt) \Pi_{k'}f$. Therefore, we obtain
\begin{equation}\label{eq_proj}
\Pi_k f =  \frac1{2\pi} \int_{-\pi}^\pi e^{i\frac t2(2k+d -H) }f dt
\end{equation}
for $f\in \mathcal S(\R^d).$ Meanwhile, the operator $e^{-itH}$  has the kernel formula
\begin{equation}\label{eq_schr}  e^{-itH}f(x) =C_d (\sin 2t)^{-\frac d2} \int_{\R^d} e^{i(\frac{|x|^2+|y|^2}{2}\, \cot 2t -\inp x y\,\csc2t)} f(y)dy
\end{equation}
for $f\in \mathcal S(\R^d)$, which is shown by making use of Mehler's formula (\cite{SoTo,Th87}).  Combining this with \eqref{eq_proj} gives an explicit expression of the kernel of $\Pi_k.$

In order to prove the uniform resolvent estimate (Theorem \ref{thm_resol}) we make use of the following mixed norm estimate for $e^{-it H}$, which strengthens the uniform bound \eqref{unif} in a different direction. 
\begin{prop}\label{st-est} Let $d\ge 3$ and  $\ppq=\mathscr B'$. Then, we have 
\begin{equation}\label{eq_mixed}
\| \int_{-\pi}^{\pi} |e^{-i\frac t2H}f|dt\|_{q,\infty} \le C \|f\|_{p,1}. 
\end{equation}
\end{prop}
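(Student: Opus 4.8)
The plan is to deduce the estimate \eqref{eq_mixed} from the endpoint Strichartz estimate of Keel--Tao \cite{keel-tao} applied to the free Schr\"odinger propagator, via the kernel identity \eqref{eq_schr}. The point is that on the time interval $(-\pi,\pi)$ (away from the singularities of $\cot$, $\csc$ at multiples of $\pi/2$, which we handle by a partition), the kernel of $e^{-i\frac t2 H}$ looks, after a change of variables, like the free Schr\"odinger kernel $(4\pi i \tau)^{-d/2} e^{i|x-y|^2/4\tau}$ modulated by a unitary factor $e^{i c(t)(|x|^2+|y|^2)}$. Concretely, writing $s=s(t)$ for an appropriate reparametrization of time (coming from $\cot t$), one has a pointwise kernel bound $|e^{-i\frac t2 H}(x,y)| \lesssim |\sin t|^{-d/2}$, which is exactly the dispersive bound $|\tau|^{-d/2}$ for free Schr\"odinger after matching $s$ with $\tau$. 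Since $\mathscr B' = (\frac{d^2+2d-4}{2d(d-1)}, \frac{d-2}{2(d-1)})$ — let me first record that the pair $(p,q)$ with these reciprocals is related to a Strichartz pair: one checks that $1/p' = 1/q + $ (scaling gap), and in fact $q = \frac{2(d-1)}{d-2}$ is the Strichartz exponent in dimension $d-1$...

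Actually, the cleaner route: I would argue that \eqref{eq_mixed} for the pair $\mathscr B'$ follows by \emph{interpolation between two estimates}. First, the $L^2 \to L^2$ bound: by Minkowski and unitarity of $e^{-i\frac t2 H}$ on $L^2$, $\|\int_{-\pi}^\pi |e^{-i\frac t2 H}f|\,dt\|_2 \le 2\pi\|f\|_2$. Second, a dispersive/Strichartz-type bound at an endpoint. For the latter I would use the representation: from \eqref{eq_proj} and \eqref{eq_schr}, after a standard change of variables reducing $e^{-i\frac t2 H}$ on $(-\pi,\pi)$ to (finitely many pieces of) the free propagator conjugated by a unitary multiplication operator and a unitary dilation, the space-time norm $\|e^{-i\frac t2 H}f\|_{L^q_t L^q_x}$ and more refined mixed norms are controlled by Keel--Tao. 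The key gain of the mixed norm $\|\cdot\|_{L^1_t L^{q,\infty}_x}$ on the left (rather than $L^q_t L^q_x$) is that $L^1$ in time over a \emph{bounded} interval is weaker than $L^q_t$ if... no — $L^1_t$ over a bounded interval is \emph{stronger} than nothing but we are not comparing with $L^q_t$. The right move is: $\|\int_{-\pi}^\pi |e^{-i\frac t2 H} f|\,dt\|_{q,\infty}$ — pull the norm inside by Minkowski's integral inequality in the Lorentz space $L^{q,\infty}$ (valid since $q>1$, so $L^{q,\infty}$ is normable), getting $\le \int_{-\pi}^\pi \|e^{-i\frac t2 H}f\|_{q,\infty}\,dt$, and then bound $\|e^{-i\frac t2 H} f\|_{q,\infty}$ by combining the dispersive estimate $\|e^{-i\frac t2 H}f\|_{q,\infty} \lesssim |\sin t|^{-d(\frac12 - \frac1q)}\|f\|_{q',1}$ (real-interpolated from $L^1\to L^\infty$ dispersive and $L^2\to L^2$) with the requirement that the time singularity $|\sin t|^{-d(1/2-1/q)}$ be integrable on $(-\pi,\pi)$, which forces $d(\frac12-\frac1q)<1$, i.e. exactly $\frac1q > \frac{d-2}{2d}$; but at $\mathscr B'$ we have $\frac1q = \frac{d-2}{2(d-1)} > \frac{d-2}{2d}$, so this is fine. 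This gives \eqref{eq_mixed} with $p=q'$, i.e. $\frac1p = 1-\frac1q = \frac{d}{2(d-1)}$ — but that is \emph{not} the first coordinate of $\mathscr B'$.

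So the diagonal dispersive argument alone overshoots; the actual exponent $\frac1p = \frac{d^2+2d-4}{2d(d-1)}$ at $\mathscr B'$ (which is strictly larger than $\frac12$, making $p<2$, off the dual line) must come from a \emph{local smoothing / Strichartz} improvement rather than pure dispersion. Therefore the real plan is: write $\int_{-\pi}^\pi|e^{-i\frac t2 H}f|\,dt \le \int_{-\pi}^\pi \|e^{-i\frac t2 H}f(\cdot)\|$ — no. Instead, dualize: \eqref{eq_mixed} is equivalent to $\big|\int_{-\pi}^\pi \langle e^{-i\frac t2 H} f, g_t\rangle\, dt\big| \lesssim \|f\|_{p,1}$ for all $g$ with $\sup_t\|g_t\|_{q',1}\le 1$ — that is, $\|\int_{-\pi}^\pi e^{i\frac t2 H} g_t\,dt\|_{p',\infty} \lesssim \sup_t\|g_t\|_{q',1}$. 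Now $T^*g := \int e^{i\frac t2 H}g_t\,dt$ and $TT^*g = \int\int e^{-i\frac{(t-t')}{2}H} g_{t'}\,dt'\,dt$; using the dispersive bound on $e^{-i\frac{\tau}{2}H}$ and the Hardy--Littlewood--Sobolev / Christ--Kiselev machinery exactly as in the proof of the inhomogeneous Strichartz estimate, one arrives at the admissible pair condition $\frac2{(1)} + \frac d q = \frac d2$ in dimension... this is where I would match with the Strichartz pair and the endpoint of Keel--Tao to land precisely at $\mathscr B'$.

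The main obstacle is the last step: verifying that the exponent pair $\mathscr B'$ is \emph{exactly} the endpoint produced by the Keel--Tao machinery after the change of variables $t\mapsto$ (free-Schr\"odinger time) — i.e., bookkeeping the Jacobians, the $|\sin t|^{-d/2}$ versus $|\tau|^{-d/2}$ matching, and the fact that the time integral runs over a bounded interval $(-\pi,\pi)$ rather than all of $\R$ (so that the non-scale-invariant exponent off the dual line is allowed). I expect that, once $e^{-i\frac t2 H}$ on each piece of $(-\pi,\pi)$ is written as $M_t \circ D_t \circ (\text{free Schr\"odinger at time } \tau(t)) \circ D_t' \circ M_t'$ with unitary modulations $M$ and dilations $D$, the estimate \eqref{eq_mixed} reduces to an endpoint inhomogeneous Strichartz / local smoothing estimate for $e^{it\Delta}$ to which \cite{keel-tao} applies, and the arithmetic $\frac1p-\frac1q=\frac2d$ together with the Keel--Tao admissibility pins down $(1/p,1/q)=\mathscr B'$. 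Keeping careful track of the restricted-weak-type ($L^{p,1}\to L^{q,\infty}$) nature throughout — which is forced because $\mathscr B'$ is an endpoint and strong-type fails there — is the delicate part.
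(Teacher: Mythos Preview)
Your proposal correctly diagnoses that the naive Minkowski-plus-dispersion argument lands on the dual line $p=q'$ and misses $\mathscr B'$, but the fallback route you sketch (conjugate to free Schr\"odinger, then invoke some inhomogeneous Strichartz/$TT^*$ endpoint from Keel--Tao and hope the bookkeeping produces $\mathscr B'$) is not a proof: you never verify the exponent arithmetic, and in fact the standard inhomogeneous Strichartz estimates do not directly yield an $L^\infty_t L^{q',1}_x \to L^{p',\infty}_x$ bound of the form you write down. The conjugation-to-free-Schr\"odinger step is also a red herring here; nothing is gained over working with $e^{-itH}$ directly, since the only inputs needed are the dispersive bound $\|e^{-i\frac t2 H}f\|_\infty\lesssim |\sin t|^{-d/2}\|f\|_1$ and the endpoint Strichartz estimate $\|e^{-i\frac t2 H}f\|_{L^2_t([-\pi,\pi];L^{2d/(d-2)}_x)}\lesssim\|f\|_2$, both of which hold for the Hermite propagator itself.

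The missing idea is a dyadic decomposition in $t$ near the singularities $0,\pm\pi$ of $|\sin t|^{-d/2}$, followed by Bourgain's summation trick. Concretely: localize to $|t|\sim 2^{-j}$ (and similarly near $\pm\pi$). On each such piece the dispersive bound gives $\|\int|\psi_j e^{-i\frac t2 H}f|\,dt\|_\infty\lesssim 2^{\frac{d-2}{2}j}\|f\|_1$, while H\"older in $t$ against the endpoint Strichartz estimate gives $\|\int|\psi_j e^{-i\frac t2 H}f|\,dt\|_{2d/(d-2)}\lesssim 2^{-j/2}\|f\|_2$. Interpolating these two places you on the segment from $(1,0)$ to $(\tfrac12,\tfrac{d-2}{2d})$ with bound $2^{(\frac d2(\frac1p-\frac1q)-1)j}$; the point $\mathscr B'$ is exactly where this exponent vanishes (since $\tfrac1p-\tfrac1q=\tfrac2d$ there), so the dyadic pieces neither decay nor grow. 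That is precisely the situation in which Bourgain's interpolation trick upgrades the family of strong-type estimates to a single restricted weak type $L^{p,1}\to L^{q,\infty}$ bound at the balance point, which is \eqref{eq_mixed}. Your $TT^*$ sketch does not capture this mechanism, and without it you will not reach $\mathscr B'$.
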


Various authors  (see \cite{K94, Th98,  T05, JLR21}) studied the problem of characterizing the sharp asymptotic bound on the operator norm  $\|\Pi_k\|_{p\to q}$  of $\Pi_k$ from $L^p$ to $L^q$ as $k\to \infty$.  In particular, 
Karadzhov \cite{K94} showed 
\begin{equation}\label{unif}
 \|\Pi_k\|_{p\to q}\le C
 \end{equation}
for a constant $C$ when $p=2$ and $q={2d}/{(d-2)}$. 
By duality and $TT^*$-argument,  the bound \eqref{unif} with $(p,q)=({2d}/{(d+2)},2)$ and $(p,q)=({2d}/{(d+2)},{2d}/{(d-2)})$ follows. Interpolating those estimates with 
the trivial bound $\|\Pi_k\|_{2\to 2}\le 1$, we have \eqref{unif}  for $p,q$ satisfying ${2d}/{(d+2)}\le p\le 2\le q\le {2d}/{(d-2)}.$

Recently,  the authors \cite[Theorem1.2]{JLR21}  showed that  \eqref{unif} holds on an extended range of $p,q$ for $d\ge3$ (see \cite{JLR20} for a related result). 
By means of Proposition \ref{st-est} we can provide a simple alternative proof of this result. Indeed, from  \eqref{eq_proj} and Proposition \ref{st-est}  it follows that  $\|\Pi_kf\|_{q,\infty}\le C\|f\|_{p,1}$ if $(1/p,1/q)=\mathscr B'.$  
By duality, the same estimate also holds for $(1/p,1/q)=\mathscr B.$  Interpolating these estimates with the above mentioned estimate \eqref{unif} for ${2d}/{(d+2)}\le p\le 2\le q\le {2d}/{(d-2)}$  gives the following.
(See Figure \ref{fig:uniform}.)

\begin{cor}$($\cite[Theorem1.2]{JLR21}$)$ 
\label{glo-est} Let $d\ge 3.$ For $p,q$ satisfying $\ppq\in \mathfrak T$ there is a constant $C>0$, independent of $k$, such that 
\eqref{unif} holds.
 Furthermore, the uniform  restricted weak type estimate for $\Pi_k$ holds if $(\frac1p,\frac1q)=\mathscr B$ or $\mathscr B'$. 
\end{cor}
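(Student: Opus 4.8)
The plan is to assemble the uniform bound \eqref{unif} on all of $\mathfrak T$ by combining three inputs: the classical estimates on the segment $\frac{2d}{d+2}\le p\le 2\le q\le \frac{2d}{d-2}$, the new endpoint bounds at $\mathscr B$ and $\mathscr B'$, and bilinear/multilinear real interpolation. First I would record the two new endpoints. From the representation formula \eqref{eq_proj}, $\Pi_k f=\frac1{2\pi}\int_{-\pi}^\pi e^{i\frac t2(2k+d)} e^{-i\frac t2 H}f\,dt$, so pointwise $|\Pi_k f|\le \frac1{2\pi}\int_{-\pi}^\pi |e^{-i\frac t2 H}f|\,dt$, and hence Proposition \ref{st-est} immediately gives the restricted-weak-type bound $\|\Pi_k f\|_{q,\infty}\le C\|f\|_{p,1}$ at $\ppq=\mathscr B'$, with $C$ independent of $k$. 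Since $\Pi_k$ is self-adjoint (it is an orthogonal projection, with real symmetric kernel), duality yields the same restricted-weak-type bound at the dual point $\ppq=\mathscr B$ (note $\mathscr B'$ is indeed the reflection of $\mathscr B$ across $(1/2,1/2)$, as one checks directly: $1-\frac{d-2}{2(d-1)}=\frac{d}{2(d-1)}$ and $1-\frac{d^2+2d-4}{2d(d-1)}=\frac{d^2-4}{2d(d-1)}=\frac{(d-2)(d+2)}{2d(d-1)}$, giving the second coordinate of $\mathscr B'$).

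Next I would set up the interpolation. The pentagon $\mathfrak T$ is the convex hull of $(\tfrac12,\tfrac12)$, $\mathscr A$, $\mathscr B$, $\mathscr B'$, $\mathscr A'$; its interior is covered by line segments joining a point of the ``old'' segment $\{\frac1p-\frac1q\le\frac2d,\ \frac{2d}{d+2}\le p\le 2\le q\le\frac{2d}{d-2}\}$ (where $\|\Pi_k\|_{p\to q}\le C$ is already known by Karadzhov's bound \eqref{unif}, $TT^*$-duality, and Riesz--Thorin) to one of the two new endpoints $\mathscr B$, $\mathscr B'$. For any $\ppq$ in the open pentagon, write it as $(1-\theta)$ times a point of the old segment plus $\theta$ times $\mathscr B'$ (or $\mathscr B$) for suitable $\theta\in(0,1)$; then the real interpolation space $(L^{p_0},L^{p_1})_{\theta,1}$ embeds in $L^{p}$ and $L^{q,\infty}=(L^{q_0},L^{q_1})_{\theta,\infty}$ contains $L^{q}$, so Marcinkiewicz-type real interpolation between a strong-type bound and a restricted-weak-type bound upgrades to the strong-type bound $\|\Pi_k\|_{p\to q}\le C$ with $C$ uniform in $k$. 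At the two extreme points $\mathscr B$ and $\mathscr B'$ themselves we retain only the restricted-weak-type statement, as asserted.

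The one point that needs care — and which I expect to be the only mild obstacle — is the interpolation bookkeeping: one must check that every $\ppq\in\mathfrak T$ other than $\mathscr B,\mathscr B'$ genuinely lies on such a segment with $\theta$ bounded away from $1$, and that the constants coming out of the real-interpolation functor are independent of $k$ (which they are, since the endpoint operator norms are $k$-independent). Because both $L^p$ spaces here are honest Lebesgue spaces on $\R^d$ — there is no mixed-norm structure, unlike in the Carleman estimate \eqref{carl} — the usual Marcinkiewicz interpolation theorem applies cleanly and none of the pathologies of real interpolation in mixed-norm spaces (cf.\ \cite{cw}) intervene. Thus the corollary follows, and this is exactly the route indicated in the text: ``Interpolating these estimates with the above mentioned estimate \ldots gives the following.''
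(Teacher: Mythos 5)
Your argument is correct and is exactly the paper's proof: the restricted weak type bound at $\mathscr B'$ from \eqref{eq_proj} together with Proposition \ref{st-est}, duality (self-adjointness of $\Pi_k$) to get $\mathscr B$, and real interpolation with the classical strong-type range $\tfrac{2d}{d+2}\le p\le 2\le q\le \tfrac{2d}{d-2}$. (Only a cosmetic slip in your side computation: the second coordinate of $\mathscr B'$ is $1-\tfrac{d^2+2d-4}{2d(d-1)}=\tfrac{(d-2)^2}{2d(d-1)}$, not $\tfrac{d^2-4}{2d(d-1)}$, and the dual map $(a,b)\mapsto(1-b,1-a)$ is reflection in the line $\tfrac1p+\tfrac1q=1$ rather than in the point $(\tfrac12,\tfrac12)$ --- immaterial here since $\mathscr B'$ is \emph{defined} as the dual point of $\mathscr B$.)
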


\begin{proof}[Proof of Proposition \ref{st-est}] 
We make use of the endpoint Strichartz estimate for $e^{-itH}$:
\begin{equation}\label{eq_endpt}
\| e^{-i\frac t2 H} f\|_{L^2_t([-\pi, \pi]; L^{p_\circ}_x(\R^d))} \le C\|f\|_2
\end{equation}
at $p_\circ =\frac{2d}{d-2}$, which is can be shown by the dispersive estimate from \eqref{eq_schr} and the standard argument in \cite{keel-tao} (for example, see \cite{SoTo}). 
We choose a smooth partition of unity so that 
\[
 \psi^0+ \sum_{j\ge 4} \big( \psi(2^j t)+ \psi(-2^j t)  +\psi(2^j(t+\pi))+\psi(2^j(\pi-t))\big)=1
 \]
for $ t\in(-\pi,\pi)\setminus\{0\}$.  Here $\psi\in \mathrm C_c^\infty([\frac14,1])$ satisfying  $\sum_j \psi(2^j t) =1$ for $t>0$, and $\psi^0 $ is a smooth function which is supported in the interval $[-\pi, \pi]$ and vanishes near  $0, \pi,$  and $-\pi$.  

Set $\psi_j^\pm=\psi(\pm 2^j\cdot)$ and  $\psi_j^{\pm\pi}=\psi(2^j(\pi-\pm\, \cdot))$. Then, for $\sigma=\pm, \pm\pi$, we have $\int |\psi_j^\sigma e^{-i\frac t2H}f| dt \lesssim 2^{\frac {d-2}2 j} \|f\|_1$ because $|\psi_j^\sigma e^{-i\frac t2H}f|\lesssim 2^{\frac d2 j}\|f\|_1$ by \eqref{eq_schr}. 
By using \eqref{eq_endpt} and H\"older's inequality followed by Minkowski's inequality 
we also get $\|\int |\psi_j^\sigma e^{-i\frac t2H}f| dt\|_{\frac{2d}{d-2}} \lesssim 2^{-\frac {1}2 j}\|f\|_2$.    Interpolation among those estimates gives 
\[ \| \int |\psi_j^\sigma e^{-i\frac t2H}f| dt \|_q\lesssim 2^{(\frac {d}2(\frac1p-\frac1q)-1) j} \|f\|_p, \quad  \sigma=\pm, \pm\pi \]
 if $\ppq$ is contained in the line segment $[(1,0), (1/2, (d-2)/2)]$.  
Bourgain's  summation trick (for example see  \cite[Lemma 2.4]{JLR21})  to the above estimates gives 
\[ \| \int |\sum_j\psi_j^\sigma e^{-i\frac t2H}f| dt \|_{q,\infty}\lesssim  \|f\|_{p,1}, , \quad  \sigma=\pm, \pm\pi\]
for $(1/p,1/q)=\mathscr B'.$  By a similar argument,  it is easy to show $\| \int |\psi^0 e^{-i\frac t2H}f| dt \|_{q}\lesssim  \|f\|_{p}$ for $(1/p,1/q)=\mathscr B'.$  Hence, combining all of those estimates, we get \eqref{eq_mixed}.
\end{proof}

We now consider $L^p$--$L^q$ estimate for the  operator $H^{-s}$, $s>0$ which is defined by $H^{-s} f =\sum_{k=0}^\infty (2k+d)^{-s} \Pi_kf.$
The operator  can also be written as
\[ H^{-s}f =\frac1{\Gamma(s)}\int_0^\infty t^{s-1} e^{-tH}f\ dt \,\footnote{For a bounded function $\mathfrak m$ on $\mathbb R_+$ the operator $\mathfrak m(H)$ is formally defined by  $\mathfrak m(H)= \sum_{k\in\N_0} \mathfrak m(2k+d)\Pi_k$.}
 \]making use of the heat semigroup $e^{-tH}$ associated to $H$.  By means of the explicit kernel expression of $e^{-tH}$ which is  based on Mehler's formula (see \cite{Th93}),  Bongioanni and Torrea \cite{BoTo} obtained  $L^p$--$L^q$ boundedness for $H^{-s}$.  Sharpness of their result was later verified by  Nowak and  Stempak \cite{NS}. Thus, the results completely characterize  $L^p$--$L^q$ boundedness of  $H^{-s}$. 

\begin{thm} $($\cite[Theorem 8]{BoTo},\,\cite[Theorem 3.1]{NS}$)$\label{sob}
Let $d\ge 1$, $1< p, q<\infty,$ and\, $0<s<d/2.$ Then,   $H^{-s}$ is bounded from $L^p$ to $L^q$ if and only if  $-{2s}/d<1/p-1/q\le  {2s}/d$.
\end{thm}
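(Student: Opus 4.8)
The plan is to work from the subordination formula $H^{-s}=\frac1{\Gamma(s)}\int_0^\infty t^{s-1}e^{-tH}\,dt$ together with Mehler's formula, which gives $e^{-tH}(x,y)=c_d(\sinh 2t)^{-d/2}\exp\big(-\tfrac14\coth(t)|x-y|^2-\tfrac14\tanh(t)|x+y|^2\big)$, and to cut the integral at $t=1$, $H^{-s}=H^{-s}_{<}+H^{-s}_{>}$. For $t\ge 1$ the kernel is dominated by $Ce^{-dt}e^{-c(|x|^2+|y|^2)}$, so $H^{-s}_{>}$ has kernel $\lesssim e^{-c(|x|^2+|y|^2)}$ and is bounded from $L^p$ to $L^q$ for all $1\le p,q\le\infty$ by Schur's test (since Gaussians lie in every $L^r$); this piece is what accounts for the part of the admissible region lying outside the classical Sobolev range. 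For $0<t\le1$ one has $e^{-tH}(x,y)\lesssim t^{-d/2}e^{-|x-y|^2/(8t)}e^{-ct|x+y|^2}$, and performing the $t$-integral (here the hypothesis $0<s<d/2$ is exactly what makes the integral converge and produces the Riesz-type singularity) shows, after splitting into $|x-y|\le1$ and $|x-y|>1$, that $H^{-s}_{<}$ is dominated by $|x-y|^{2s-d}e^{-c|x-y||x+y|}\mathbf 1_{|x-y|\le1}$ plus a harmless kernel $\lesssim e^{-c(|x|+|y|)}$.

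For sufficiency it remains to bound the operator $T$ with kernel $|x-y|^{2s-d}e^{-c|x-y||x+y|}\mathbf 1_{|x-y|\le1}$, and I would use a dyadic decomposition in the spatial variable, $T=\sum_{m\ge0}T_m$ with $|x|\sim 2^m$ on the support of $T_m$; there one also has $|y|\sim2^m$ and $|x+y|\sim2^m$, so $T_m$ is controlled by convolution with $|z|^{2s-d}e^{-c2^m|z|}$ restricted to a set of measure $\sim2^{md}$. When $0\le\tfrac1p-\tfrac1q<\tfrac2d$ (so $q\ge p$) Young's inequality gives $\|T_m\|_{p\to q}\lesssim 2^{m(d(\frac1p-\frac1q)-2s)}$, which is summable; when $-\tfrac2d<\tfrac1p-\tfrac1q<0$ (so $q<p$, where Young's inequality is useless) Hölder on the set $\{|x|\sim2^m\}$ combined with the $L^p\to L^p$ bound $\lesssim 2^{-2ms}$ gives $\|T_m\|_{p\to q}\lesssim 2^{-m(d(\frac1p-\frac1q)+2s)}$, again summable precisely because $\tfrac1p-\tfrac1q>-\tfrac2d$; the $m=0$ term is trivial since $|z|^{2s-d}\mathbf 1_{|z|\le1}\in L^1$. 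The critical endpoint $\tfrac1p-\tfrac1q=\tfrac2d$ is exactly where this dyadic sum diverges and where $|z|^{2s-d}$ fails to lie in the Lebesgue space needed for Young's inequality, so there I would instead dominate $T$ by the Riesz potential $I_{2s}$ and invoke the Hardy–Littlewood–Sobolev inequality, valid for $1<p<q<\infty$.

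For necessity I would use two families of test functions. If $\tfrac1p-\tfrac1q>\tfrac2d$, Mehler's formula also yields the matching lower bound $H^{-s}(x,y)\gtrsim|x-y|^{2s-d}$ for $x,y\in B(0,1)$, so $H^{-s}$ dominates a localized Riesz potential and $f_\varepsilon=\varepsilon^{-d/p}\mathbf 1_{B(0,\varepsilon)}$ gives $\|H^{-s}f_\varepsilon\|_q/\|f_\varepsilon\|_p\gtrsim\varepsilon^{2s-d(\frac1p-\frac1q)}\to\infty$. If $\tfrac1p-\tfrac1q\le-\tfrac2d$, on the annulus $|x|\sim R$ the kernel essentially becomes $|x-y|^{2s-d}\mathbf 1_{|x-y|\lesssim1/R}$, whose $L^p\to L^q$ operator norm over that annulus is $\approx R^{-2s-d(\frac1p-\frac1q)}$ and is attained, up to constants, by a sum of $L^p$-normalized bumps of scale $1/R$ filling the annulus; for $\tfrac1p-\tfrac1q<-\tfrac2d$ this norm already blows up as $R\to\infty$, while for the endpoint $\tfrac1p-\tfrac1q=-\tfrac2d$ the norm is $\approx1$ and one superposes the corresponding extremizers over the disjoint annuli $\{|x|\sim2^m\}$, $m=1,\dots,M$, to reach ratio $\approx M^{1/q-1/p}=M^{2s/d}\to\infty$. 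I expect the two endpoints $\tfrac1p-\tfrac1q=\pm\tfrac2d$ to be the crux: at $+\tfrac2d$ the crude semigroup bound only gives a logarithmic divergence and one must pass to the sharp Hardy–Littlewood–Sobolev estimate, and at $-\tfrac2d$ one must extract from Mehler's formula the precise off-diagonal decay in $|x+y|$ and exploit the finite measure of the annuli, since no convolution estimate is available when $q<p$.
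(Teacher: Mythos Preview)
The paper does not give its own proof of this statement: Theorem~\ref{sob} is quoted verbatim from Bongioanni--Torrea \cite{BoTo} (sufficiency) and Nowak--Stempak \cite{NS} (necessity), as the attribution in the theorem heading and the preceding sentence make explicit. So there is no in-paper argument to compare against.

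That said, your outline is essentially the approach of those references: the sufficiency argument in \cite{BoTo} proceeds exactly by writing $H^{-s}=\frac{1}{\Gamma(s)}\int_0^\infty t^{s-1}e^{-tH}\,dt$, invoking Mehler's formula for the kernel of $e^{-tH}$, splitting the $t$-integral into small and large times, and reducing the local part to a Riesz-potential-type kernel, while the necessity in \cite{NS} is obtained by testing against suitable bump functions that detect the pointwise behaviour of the potential kernel. Your sketch is a faithful reconstruction of that strategy. One recurring slip: throughout your proposal the thresholds are written as $\pm\tfrac{2}{d}$ where they should be $\pm\tfrac{2s}{d}$ (your own exponent bounds such as $2^{m(d(\frac1p-\frac1q)-2s)}$ are consistent with the correct value, so this is a typo rather than a conceptual error). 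Also, the bound you state for the small-$t$ kernel, $|x-y|^{2s-d}e^{-c|x-y|\,|x+y|}$, is the right object but emerges only after a saddle-point estimate for $\int_0^1 t^{s-1-d/2}e^{-a/t-bt}\,dt$; you may want to make that step explicit, since a naive bound gives $e^{-c|x-y|^2|x+y|^2}$ on the regime $|x-y|\,|x+y|\lesssim 1$, which is weaker but still sufficient for the argument.
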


There are weak/restricted weak type  estimates in the borderline cases which are not included in the above theorem, and we refer the readers to \cite{NS} for more details regarding such endpoint estimates.

\section{Proof of Theorem \ref{thm_resol}}

We consider more general operator $(H-z)^{-m}$, $m\in \mathbb N$ which is  given by
\[
(H-z)^{-m} f =\sum_{k=0}^\infty \frac{\Pi_k\,f}{(2k+d-z)^{m}}=(-2)^{-m}\sum_{k=0}^\infty \frac{\Pi_k\,f}{(i\tau +\beta-k)^{m}}
\]
with 
$z=2\beta+d+2\tau i$ and $\beta\not\in\mathbb N_\circ$. We prove the following. 

\begin{thm}\label{uniform} 
Let $d\ge 3$ and let $m$ be a positive integer. Suppose that  \eqref{eq:z-dist} holds for some $c>0$. If $\ppq\in(\mathscr B,\mathscr B')$, then there is a constant $C=C(m)$, independent of $z$, such that  
\Be
\label{h-resolvent}
\|(H-z)^{-m} f\|_q\le C (1+|\im z|)^{1-m}\|f\|_p 
\Ee
 Furthermore, if $\pq=\mathscr B$ or $ \mathscr B'$, then we have   the restricted weak type estimate  $\|(H-z)^{-m} f\|_{q,\infty}\le C(1+|\im z|)^{1-m}\|f\|_{p,1} $. 
 \end{thm}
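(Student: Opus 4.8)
The plan is to reduce the operator norm estimate for $(H-z)^{-m}$ to the endpoint bound $\|\Pi_k f\|_{q,\infty}\le C\|f\|_{p,1}$ for $(1/p,1/q)=\mathscr B'$ (which is Corollary \ref{glo-est}, itself a consequence of Proposition \ref{st-est}), together with its dual statement at $\mathscr B$ and interpolation to cover the open segment $(\mathscr B,\mathscr B')$. The starting point is the resolvent identity
\[
(H-z)^{-m} f = \frac{1}{\Gamma(m)}\int_0^\infty t^{m-1} e^{-t(H-z)} f\, dt
\]
valid when $\re z < d$; for general $z$ with $\dist(z,2\N_0+d)\ge c$ one first peels off the finitely many low eigenspaces $\Pi_k$ with $2k+d$ close to $\re z$ (there are $O(1)$ of them, each contributing a bounded operator with the right power of $(1+|\im z|)$ via the factor $(2k+d-z)^{-m}$ and the uniform bound on $\Pi_k$), then applies the integral formula to a shifted operator. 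Alternatively — and this is probably cleaner — one uses the periodicity in the formula \eqref{eq_proj}: writing $(i\tau+\beta-k)^{-m}$ as a Fourier coefficient, one has
\[
\sum_{k} \frac{\Pi_k f}{(i\tau+\beta-k)^m} = \frac{1}{2\pi}\int_{-\pi}^{\pi} \widehat{G}_{m,\tau,\beta}(t)\, e^{i\frac t2(2\beta'+d-H)} f\, dt
\]
for a suitable $2\pi$-periodic kernel $G_{m,\tau,\beta}$, where $e^{i\frac t2(\ldots-H)}$ is the propagator controlled by Proposition \ref{st-est}; the point is that $\|\widehat G_{m,\tau,\beta}\|_{L^\infty_t}$ and the relevant $L^1_t$-type quantity are $\lesssim (1+|\tau|)^{1-m}$ uniformly in $\beta$ once $\dist(\beta,\N_0)\ge c$.

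Concretely, I would first establish the case $m=1$ and $(1/p,1/q)=\mathscr B'$: from $(H-z)^{-1} = -\tfrac12\sum_k (i\tau+\beta-k)^{-1}\Pi_k$ and the identity $\tfrac{1}{i\tau+\beta-k} = \tfrac1{2\pi}\int_{-\pi}^{\pi} m_\tau(t)\, e^{it(k-\beta)}\,dt$ for an explicit periodic multiplier $m_\tau$ with $\|m_\tau\|_\infty \lesssim 1$ (the decay in $\tau$ being the gain $(1+|\tau|)^0=1$ for $m=1$), one gets
\[
\|(H-z)^{-1} f\|_{q,\infty} \lesssim \int_{-\pi}^\pi |m_\tau(t)|\, \big\| e^{-i\frac t2 H} f\big\|_{q,\infty}\,\frac{dt}{2\pi}\ \text{(morally)},
\]
and then one invokes Proposition \ref{st-est} to bound $\int_{-\pi}^\pi \| e^{-i\frac t2 H} f\|\,dt$ — the correct way to phrase it is to move the absolute value inside and use exactly the left-hand side of \eqref{eq_mixed}, i.e.\ $\|\int_{-\pi}^\pi |e^{-i\frac t2 H} f|\,dt\|_{q,\infty}\lesssim \|f\|_{p,1}$, after absorbing the bounded factor $m_\tau$. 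This yields the restricted weak type bound at $\mathscr B'$; duality gives $\mathscr B$; real interpolation (the off-diagonal Marcinkiewicz theorem, which behaves well here because we are on a single $L^p$–$L^q$ pair, not a mixed-norm space) gives strong type $L^p\to L^q$ on the open segment. For $m\ge 2$, I would differentiate the $m=1$ multiplier identity $m-1$ times in $\beta$ (or convolve $m_\tau$ with itself), producing a periodic multiplier whose $L^\infty$ norm is $\lesssim (1+|\tau|)^{1-m}$ — this is where the factor $(1+|\im z|)^{1-m}$ enters — and repeat the argument verbatim.

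The main obstacle I anticipate is making the heuristic "pull the $L^{q,\infty}$ norm through the $t$-integral" step rigorous, since $L^{q,\infty}$ is only a quasi-norm and Minkowski's inequality fails; this is precisely why Proposition \ref{st-est} is stated with the absolute value and the $dt$-integral already inside the norm, and I would need to be careful that the multiplier identity for $(i\tau+\beta-k)^{-m}$ can be arranged so that only $\int_{-\pi}^\pi |(\cdots)|\,dt$ appears, with all $\tau,\beta$-dependence pulled out as a scalar $\lesssim(1+|\tau|)^{1-m}$. A secondary technical point is the removal of the $O(1)$ eigenspaces near $\re z$ when $\re z\ge d$, and checking that the periodic-multiplier representation is valid (convergence of the Hermite expansion, justified as in Section 2 for Schwartz $f$, then density); neither of these should be serious. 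The interpolation endpoint behavior at $\mathscr B,\mathscr B'$ is automatically restricted weak type rather than strong type, exactly as in Corollary \ref{glo-est}, so no improvement there is expected or claimed.
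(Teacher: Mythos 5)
Your overall architecture matches the paper's: reduce to a restricted weak type bound at $\mathscr B'$, obtain it from the representation \eqref{eq_proj} together with Proposition \ref{st-est}, then dualize to $\mathscr B$ and real-interpolate (which is indeed unproblematic here since no mixed norms are involved). Where you genuinely diverge is the core mechanism for $m=1$: you propose the explicit $2\pi$-periodic extension $m_{\tau,\beta}(t)=2\pi i\,e^{(-\tau+i\beta)t}/(e^{2\pi(-\tau+i\beta)}-1)$ whose Fourier coefficients are $(i\tau+\beta-k)^{-1}$, and then dominate $|(H-z)^{-1}f(x)|$ pointwise by $\|m_{\tau,\beta}\|_\infty\int_{-\pi}^\pi|e^{-i\frac t2 H}f(x)|\,dt$ before taking the $L^{q,\infty}$ quasi-norm. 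The paper instead proves a more general statement (Proposition \ref{ggg}) for the multiplier class $\mathcal C(\mathcal B,t_\circ)$: it splits $G(\tfrac{2n+d-H}{2})=\mathcal J_n+\mathcal K_n$, controls $\mathcal J_n$ by bounding the trigonometric sum $\sum_k G(k)\sin(tk)\phi(k/n)$ via summation by parts, and controls $\mathcal K_n$ by factoring through $H^{-1}$ and the Marcinkiewicz multiplier theorem. Your route is more direct for the specific resolvent multiplier and avoids the trigonometric-sum estimates entirely; the paper's buys a statement for a whole class of multipliers and isolates cleanly the part of the operator that behaves like $H^{-1}$.

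Two concrete points need repair. First, your claimed uniform bound $\|m_{\tau,\beta}\|_\infty\lesssim 1$ under $\dist(\beta,\N_0)\ge c$ is false in the regime where $\beta$ is near a \emph{negative} integer and $|\tau|$ is small (e.g.\ $\beta=-3$, $\tau=0$ satisfies the gap condition but $e^{2\pi i\beta}=1$): the periodic extension has Fourier coefficients $(i\tau+\beta-k)^{-1}$ for \emph{all} $k\in\Z$, and the huge coefficients at negative $k$ (which the Hermite expansion never sees) force $\|m_{\tau,\beta}\|_\infty$ to blow up. So the periodic-multiplier argument only covers, say, $\re z>d-1$ or $|\im z|\gtrsim 1$; for $\re z$ below the spectrum with $\im z$ small you must argue separately, e.g.\ as the paper does by writing the operator as $H^{-1}$ composed with a multiplier $k\mapsto(2k+d)/(i\tau+\beta-k)$ satisfying Marcinkiewicz bounds uniformly, and using Theorem \ref{sob} on the critical line $1/p-1/q=2/d$ (which is where $\mathscr B'$ sits). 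Second, for $m\ge 2$ the differentiated periodic multiplier $\partial_\beta^{m-1}m_{\tau,\beta}$ has $L^\infty$ norm $O(1)$ but \emph{not} $O((1+|\tau|)^{1-m})$ (for large $\tau>0$ the term $t^{m-1}e^{(-\tau+i\beta)t}/(e^{2\pi(-\tau+i\beta)}-1)$ is of size $\approx(2\pi)^{m-1}$ near $t=0$), so your route loses the decay $(1+|\im z|)^{1-m}$ in the statement. The decay should instead be obtained as in the paper, by summing $\sum_{k}|2k+d-z|^{-m}\lesssim(1+|\im z|)^{1-m}$ directly against the uniform restricted weak type bounds for $\Pi_k$ (legitimate since $L^{q_\ast,\infty}$ is normable for $q_\ast>1$). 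With these two fixes your argument goes through.
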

 
 While the estimates for  $m\ge 2$ are rather straightforward from the \eqref{unif}, the proof of \eqref{h-resolvent} for  $m=1$  is more involved.  
 This  case  is  handled  in Proposition \ref{ggg} below.

 \begin{rem}\label{re_gap} The gap condition \eqref{eq:z-dist} is necessary for the uniform estimate \eqref{h-resolvent} to hold.  In fact,  $ \|(H-z)^{-m}\|_{p \to q} \ge |2k+d-z|^{-m}\|f\|_q/\|f\|_p$  if $f$ is an eigenfunction with eigenvalue $2k+d.$   
Therefore,  the operator norm can not be bounded as  $z\to 2k+d$  unless \eqref{eq:z-dist} holds. 
 \end{rem}

For $\mathcal B, t_\circ>0$,   let $\mathcal C(\mathcal B,t_\circ)$  denote the class of functions on $\mathbb R$ which satisfy
  \begin{align}
 & \ |G(n)|\le \mathcal B,  \quad  n\in \mathbb Z;
 \label{g-con1}
 \\
 &\sum_{k=1}^\infty |G(k)+G(-k)|\le \mathcal B;
 \label{g-con2}
 \\
&\sum_{k=1}^\infty |kG(k)-(k+1)G(k+1)|\le \mathcal B;
\label{g-con22}
\\
& \Big |\Big( \frac{d}{dt}\Big)^l G( t)\Big  |\le \mathcal B  (1+  |t|)^{-l-1}, \quad t_\circ< |t|
 \label{g-con3} 
 \end{align}
 for $0\le  l\le (d+2)/2$. 
 Particular examples satisfying the conditions \eqref{g-con1}--\eqref{g-con3} are  $G_{\mu, \tau}(t)=1/(i\tau +t +\mu)$ where  $(\mu,\tau)\in( -\frac12,\frac12)\times \R$ and $|(\mu,\tau)|\ge c$ for some small $c>0$.

\begin{prop} 
\label{ggg} 
Let $d\ge 3$  and $\ppq\in(\mathscr B ,\mathscr B')$.  
Suppose that $G$ is in $\mathcal C(\mathcal B,t_\circ)$. Then, for any $n\in \mathbb N_0$, there is a constant $C$, depending only on  $\mathcal B$ and $t_\circ$, such that 
\Be\label{eq_G}
\Big\| G\Big(\frac{2n+ d-H}2\Big) f\Big\|_q\le C  \|f\|_p.
\Ee
Furthermore, if $\ppq=\mathscr B $ or $ \mathscr B' $,   the restricted weak type $(p,q)$ estimate holds for $G(\frac{2n+ d-H}2)$ with a uniform bound. \end{prop}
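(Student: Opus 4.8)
The plan is to realize the operator $G\big(\tfrac{2n+d-H}{2}\big)$ as a superposition of the Hermite--Schr\"odinger propagators $e^{-i\frac t2H}$, $t\in[-\pi,\pi]$, and then invoke Proposition~\ref{st-est}. From \eqref{g-con1}, \eqref{g-con2}, and \eqref{g-con22} one readily checks $(G(j))_{j\in\mathbb Z}\in\ell^2(\mathbb Z)$, so the Fourier series $\Psi(t):=\sum_{j\in\mathbb Z}G(j)e^{-ijt}$ defines a function in $L^2(\mathbb T)$, whose Fourier coefficients are $\frac1{2\pi}\int_{-\pi}^{\pi}\Psi(t)e^{ijt}\,dt=G(j)$. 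I claim the identity
\[
 G\Big(\tfrac{2n+d-H}{2}\Big)f=\frac1{2\pi}\int_{-\pi}^{\pi}e^{i\frac{(2n+d)t}{2}}\,\Psi(t)\,e^{-i\frac t2H}f\,dt .
\]
This is verified by pairing both sides with $g\in L^2$ and matching Fourier series: with $G\big(\tfrac{2n+d-H}{2}\big)f=\sum_{k\ge0}G(n-k)\Pi_kf$ and the spectral expansion of $e^{-i\frac t2H}$, the pairing on the right equals $\sum_{k\ge0}\langle\Pi_kf,g\rangle\,\tfrac1{2\pi}\int_{-\pi}^{\pi}\Psi(t)e^{i(n-k)t}\,dt=\sum_{k\ge0}G(n-k)\langle\Pi_kf,g\rangle$, where the interchange of sum and integral is justified by $\sum_k|\langle\Pi_kf,g\rangle|\le\|f\|_2\|g\|_2$ (Cauchy--Schwarz in $k$).

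Granting the key estimate $\|\Psi\|_{L^\infty(\mathbb T)}\le C(\mathcal B,t_\circ)$, the proposition is immediate. Pointwise in $x$, the identity gives $\big|G\big(\tfrac{2n+d-H}{2}\big)f(x)\big|\le\tfrac{\|\Psi\|_\infty}{2\pi}\int_{-\pi}^{\pi}\big|e^{-i\frac t2H}f(x)\big|\,dt$, so Proposition~\ref{st-est} yields $\big\|G\big(\tfrac{2n+d-H}{2}\big)f\big\|_{q,\infty}\lesssim\|f\|_{p,1}$ at $\ppq=\mathscr B'$. Since $\overline G\in\mathcal C(\mathcal B,t_\circ)$ (the conditions \eqref{g-con1}--\eqref{g-con3} involve only moduli) and $G\big(\tfrac{2n+d-H}{2}\big)^{\ast}=\overline G\big(\tfrac{2n+d-H}{2}\big)$, the same argument applied to $\overline G$ together with the duality between $L^{p,1}$ and $L^{p',\infty}$ gives the restricted weak type bound at $\ppq=\mathscr B$. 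Finally, real interpolation between these two restricted weak type estimates — valid since $1/p-1/q=2/d>0$, hence $p<q$, throughout the segment — produces the strong type bound \eqref{eq_G} for $\ppq\in(\mathscr B,\mathscr B')$, and it also establishes the claimed restricted weak type bounds at the two endpoints.

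It remains to bound $\|\Psi\|_{L^\infty(\mathbb T)}\lesssim\mathcal B$; this is where \eqref{g-con1}, \eqref{g-con2}, and \eqref{g-con22} do the work, and I expect it to be the only genuine difficulty. Split $G$ on $\mathbb Z$ into its even and odd parts. The even part contributes $G(0)+\sum_{j\ge1}(G(j)+G(-j))\cos jt$, of modulus $\le 2\mathcal B$ by \eqref{g-con1} and the absolute summability in \eqref{g-con2}. For the odd part, one more use of \eqref{g-con2} (replacing $G(-j)$ by $-G(j)$ modulo an $\ell^1$ sequence) reduces matters to bounding $\sum_{j\ge1}G(j)\sin jt$ uniformly for $t\in(0,\pi]$. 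Put $v_j:=jG(j)$: condition \eqref{g-con22} says precisely that $(v_j)_{j\ge1}$ has total variation $\le\mathcal B$, so it is bounded by $2\mathcal B$ (using $|v_1|=|G(1)|\le\mathcal B$) and converges to some $v_\infty$. Writing $\sum_{j\ge1}G(j)\sin jt=v_\infty\sum_{j\ge1}\frac{\sin jt}{j}+\sum_{j\ge1}\frac{v_j-v_\infty}{j}\sin jt$, the first term equals $v_\infty\tfrac{\pi-t}{2}$, of modulus $\le\pi\mathcal B$, while the second is treated by summation by parts against the conjugate Dirichlet kernels $\widetilde D_j(t)=\sum_{m=1}^{j}\sin mt$: splitting the sum at $j\sim 1/t$ and using $|\widetilde D_j(t)|\lesssim\min(j,1/t)$ together with $|v_j-v_\infty|\le\sum_{k\ge j}|v_k-v_{k+1}|$ (which decreases to $0$) gives a bound $\lesssim\mathcal B$. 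The delicate point is exactly this sine series near $t=0$: a crude use of $|v_j|\lesssim\mathcal B$ there costs a logarithmic factor, so one really must peel off the explicit $v_\infty$-piece first and then exploit the decay of the variation tails. The remaining items — the $\ell^2$ bound, the interchange, the verification $\overline G\in\mathcal C(\mathcal B,t_\circ)$, and the interpolation — are routine.
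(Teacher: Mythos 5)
Your argument is correct, and it reaches the same destination as the paper's proof through the same essential ingredients (the representation \eqref{eq_proj} of $\Pi_k$ as an average of $e^{-i\frac t2 H}$, Proposition \ref{st-est} at $\mathscr B'$, duality via $\ol G$, and real interpolation between the two restricted weak type endpoints), but your decomposition of the multiplier is genuinely different and cleaner. The paper splits $G(\frac{2n+d-H}2)=\mathcal J_n+\mathcal K_n$ with a cutoff $\phi(\frac{n-k}{n})$, treats $\mathcal J_n$ by reindexing into $\Pi_{n-k}\pm\Pi_{n+k}$ and bounding the $n$-dependent density $\zeta_n(t)$, and handles the tail $\mathcal K_n$ as $H^{-1}\circ m_n(H)$ via the Marcinkiewicz multiplier theorem together with Theorem \ref{sob}; it also needs a separate case $n<n_\circ$. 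You instead encode the entire multiplier in a single $n$-independent density $\Psi(t)=\sum_j G(j)e^{-ijt}$ and reduce everything to $\|\Psi\|_{L^\infty(\mathbb T)}\lesssim \mathcal B$, which you prove by exactly the kind of summation by parts (against partial sums of $\sum j^{-1}\sin jt$, using \eqref{g-con22}) that the paper uses for $\zeta_n$; your extra step of peeling off the $v_\infty\sum_j j^{-1}\sin(jt)=v_\infty\frac{\pi-t}{2}$ piece is the right way to avoid the logarithmic loss near $t=0$, and the needed bound $|jG(j)|\le 2\mathcal B$ follows from \eqref{g-con1} and \eqref{g-con22} as you say. What your route buys: no spectral truncation, no case distinction in $n$, no use of the Marcinkiewicz multiplier theorem or of Theorem \ref{sob}, and in fact no use of the derivative condition \eqref{g-con3} at all, so you prove the proposition for a slightly larger class of $G$. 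What the paper's route buys is mainly modularity (the tail piece $\mathcal K_n$ is bounded on the whole line $1/p-1/q=2/d$, $1<p,q<\infty$), which is not needed for the stated result. Two minor points you should make explicit in a write-up: the pointwise inequality $|G(\frac{2n+d-H}2)f(x)|\le\frac{\|\Psi\|_\infty}{2\pi}\int_{-\pi}^{\pi}|e^{-i\frac t2H}f(x)|\,dt$ should be justified first for $f\in\mathcal S$ (where $\sum_k e^{it(n-k)}\Pi_kf$ converges uniformly and the $t$-integral is absolutely convergent a.e.\ because the majorant lies in $L^{q,\infty}$), and the interpolation step producing strong type at interior points of $(\mathscr B,\mathscr B')$ uses that the two endpoint exponent pairs have distinct first and distinct second coordinates with $p<q$ throughout, which holds here since the segment lies on the line $1/p-1/q=2/d$.
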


\begin{proof}
Let $p_\ast$ and $q_\ast$ be given by $(1/p_\ast, 1/q_\ast)=\mathscr B'$.  In order to show Proposition \ref{ggg}  it is sufficient  to show the restricted weak type $(p_\ast,q_\ast)$ estimate   for  $G(\frac{2n+ d-H}2)$. Note that the adjoint operator $G(\frac{2n+ d-H}2)^\ast$ is given by $G(\frac{2n+ d-H}2)^\ast f=\sum_{k=0}^\infty \ol G(n-k) \Pi_kf$.  Then, clearly $\ol  G\in \mathcal C(\mathcal B,t_\circ)$. Hence,  the same argument shows that  restricted weak type $(p_\ast,q_\ast)$ estimate  holds  for  $G(\frac{2n+ d-H}2)^*$. This in turn gives  the restricted weak type estimate $(q_\ast', p_\ast')$  for  $G(\frac{2n+ d-H}2)$ by duality. Real interpolation between these two (restricted weak type) estimates for $G(\frac{2n+ d-H}2)$  yields the desired estimates for $\ppq\in(\mathscr B ,\mathscr B')$.

No  differentiability  assumption is made on $G$ for  $|t|\le t_\circ$. So, we handle the cases $n\ge n_\circ$ and $n<n_\circ$ separately, where  $n_\circ$  is an integer satisfying $n_\circ\ge2t_\circ.$ 
We first consider the case $n\ge n_\circ$.  Recalling  $G(\frac{2n+ d-H}2)=\sum_{k=0}^\infty G(n-k) \Pi_k$,   we decompose 
 \begin{align*}
 G\Big(\frac{2n+ d-H}2\Big)=:\mathcal J_n+\mathcal K_n, 
 \end{align*}
 where
\begin{align*}
\mathcal J_{n}&:=\sum_{k=0}^\infty  G(n-k) \phi\Big(\frac{n-k}n\Big)\Pi_k ,
\\
\mathcal K_n&:=\sum_{k=0}^\infty G(n-k) \Big(1-\phi\Big(\frac{n-k}n\Big)\Big)\Pi_k. 
\end{align*}
Here,  we choose  a non-negative smooth even function  $\phi$ on $\mathbb R$ such that $\phi(t)=1$ on $[-1/2,1/2]$,  $\phi=0$ if $1\le |t|$, and $\phi$ is non-increasing on the half-line $t>0$. This monotonicity assumption plays an  important role in estimating a sum of trigonometric functions. 

The $\mathcal J_n$  is the major contribution to the estimate \eqref{eq_G} and is to be handled by  the integral formula for $\Pi_k$  and Lemma \ref{st-est}. The second   $\mathcal K_n$ behaves like the operator $H^{-1}$, which is actually bounded from $L^p$--$L^q$ on a larger range of $p,q$. We consider $\mathcal J_n$ first. 

We  set 
\begin{align*}
 \CI_1&= \sum_{k=1}^n  G(k) \phi({k}/ n)(\Pi_{n-k}-\Pi_{n+k}),
\\
 \CI_2&=\sum_{k=1}^n  (G(-k)+  G(k)) \phi({k}/ n)\Pi_{n+k}.
 \end{align*}
Since $\phi$ is even function and supported in $[-1,1]$, after reindexing by $(n-k)\to k$   we see 
$\mathcal J_n =
\sum_{k=1}^n  G(k) \phi({k}/ n)\Pi_{n-k}+ G(0) \Pi_n+ \sum_{k=1}^n  G(-k) \phi({k}/ n)\Pi_{n+k}$. Thus,  
\begin{align*}
\mathcal J_n =\CI_1+\CI_2+ G(0) \Pi_n
.
\end{align*}
By   \eqref{g-con1},   \eqref{g-con2}, and  the uniform restricted weak type $(p_\ast, q_\ast)$ estimate for $\Pi_\lambda$ in Corollary \ref{glo-est},
it follows that  $\|G(0) \Pi_n f\|_{\qs,\infty}\lesssim   \mathcal B\|f\|_{\ps,1} $ and $\|\CI_2 \|_{\qs,\infty}\lesssim   \mathcal B\|f\|_{\ps,1} $. 
So, it suffices to deal with the first term $\CI_1$.  Using the formula \eqref{eq_proj},  we note  $\Pi_{n-k}f-\Pi_{n+k}f=-\frac i\pi \int_{-\pi}^\pi  \sin (tk) e^{i\frac t2(2n+d -H)}fdt$. Thus,   we have 
\begin{align*} 
\CI_1 f =\int_{-\pi}^\pi \zeta_n(t)e^{-i\frac t2H}f dt,
\end{align*}
where 
\[\zeta_n(t) =-\frac i\pi e^{i\frac t2(2n+d)}\sum_{k=1}^n G(k)   \sin (tk) \phi({k}/ n) ,\quad -\pi \le t\le \pi.\]
Using Proposition \ref{st-est},  it is sufficient to show  \begin{equation}\label{alter}
 |\zeta_n(t) |\le C
 \end{equation}
with $C$ independent of  $n$ and $G$. By the property of $\phi$ which we have chosen,  it is clear  that 
$
|\zeta_n(t) |\lesssim  |\sum_{k=1}^{\lfloor \frac n2\rfloor}   \sin (tk)G(k)|
+|\sum_{k=\lfloor \frac n2\rfloor+1}^n \sin (tk)G(k)\phi({k}/ n)|. 
$ 
Boundedness of the second term is easy to show. Indeed, since the condition \eqref{g-con3} holds for $|t|> {n}/2$ by our choice of $n_\circ$, 
we see 
\[|\sum_{k=\lfloor \frac n2\rfloor+1}^n \sin (tk)G(k)\phi({k}/ n)|\lesssim \mathcal B\sum_{k=\lfloor \frac n2\rfloor+1}^n {k^{-1}}\phi( k/n)\lesssim \mathcal B.\]
So, for \eqref{alter} we only have to show  the estimate 
$
| \sum_{k=1}^n \sin (tk)G(k)|
\lesssim 1
 $
for any $n$.  Setting $\sigma_k(t)=\sum_{j=1}^k j^{-1}{\sin (jt)}$,  
 by summation by parts we write 
\[\sum_{k=1}^n \sin (tk)G(k)=\sum_{k=1}^{n-1}\sigma_k(t)\Big(k G(k) -(k+1)G(k+1)\Big)+  \sigma_n(t) nG(n).\] 
Since $|\sigma_k(t)|\lesssim 1$ for any $k, t$ as can be shown  by an elementary argument,\footnote{This can be seen by approximating Dirichlet's kernel, or again by summation by parts.} by 
the conditions \eqref{g-con22} and \eqref{g-con3} it follows that $
| \sum_{k=1}^n \sin (tk)G(k)|
\lesssim 1
 $.

We now turn to the operator $\mathcal K_n$. Clearly, we may write
$\mathcal K_n =H^{-1} \circ m_n(H)$
where  $m_n$ is given by 
\[m_n(t) =  t \,G\big({(2n+d-t)}/2\big) \big(1-\phi ((2n+d-t)/{2n})\big),\] 
which is in $C^\infty(\mathbb R).$
Using \eqref{g-con1}, \eqref{g-con3}, and the support property of $\phi$,  a  simple calculation shows $|\frac {d^l}{dt^l} m_n(t)|\lesssim (1+t)^{-l}$ for 
 $l=0,1,2,\cdots,(d+2)/2$ whenever $t>0$ and  the  implicit constants are independent of $n$. Thus, the Marcinkiewicz multiplier theorem \cite[Theorem 4.2.1]{Th93} implies that $m_n(H)$ is bounded on $L^p$, $1<p<\infty,$ uniformly in  $n$.  By Theorem \ref{sob}, $H^{-1} $ is also bounded from $L^p$ to $L^q$ for $1<p,q<\infty$ satisfying $1/p-1/q=2/d$.  Hence, we have
\[ \|\mathcal K_n\|_{p\to q}\le \|H^{-1}\|_{p\to q}  \|m_n(H)\|_{p\to p}\lesssim 1\]
with the implicit constant independent of $n$.

We now consider the case $n<n_\circ$, which  is much simpler to show than the case $n\ge n_\circ$. To prove \eqref{eq_G}, we break $G(\frac{2n+ d-H}2)$ as follows:
\begin{align*}
 G\Big(\frac{2n+ d-H}2\Big) =\widetilde {\mathcal J}_n+\widetilde {\mathcal K}_n, 
 \end{align*}
where 
\begin{align*}
\widetilde {\mathcal J}_{n}&=\sum_{k=0}^\infty  G(n-k) \phi (k/{2n_\circ})\Pi_k ,
\\
\widetilde  {\mathcal K}_n&=\sum_{k=0}^\infty G(n-k) \big(1-\phi(k/{2n_\circ})\big)\Pi_k. 
\end{align*}
Clearly,  the multiplier $G\big({(2n+d-\cdot)}/2\big) \big(1-\phi ((2n+d-\cdot)/{2n_\circ})\big)$ of the operator $\widetilde{\mathcal K}_n$ satisfies the condition \eqref{g-con3}. So, 
in the same manner as in the above we obtain the bound $\|\widetilde {\mathcal K}_n\|_{p\to q}\lesssim 1$  if $1<p,q<\infty$ and $1/p-1/q=2/d$. By the condition \eqref{g-con1} and Corollary \ref{glo-est} 
it follows that $\|\widetilde{\mathcal J}_n f\|_{\qs,\infty} \le \mathcal B \sum_{k=0}^{2n_\circ}\|\Pi_kf\|_{\qs,\infty}\lesssim \|f\|_{\ps,1}$ 
uniformly in $n\le n_\circ$. This  completes the proof of Proposition \ref{ggg}. 
\end{proof}

We are ready to prove Theorem \ref{uniform}.

\begin{proof}[Proof of Theorem \ref{uniform}]  
Let $p_\ast$ and $q_\ast$ be given by $(1/p_\ast, 1/q_\ast)=\mathscr B'$. As in the proof of Proposition \ref{ggg},   it is enough to show 
the restricted weak type $(p_\ast, q_\ast)$ estimate for $(H-z)^{-m} $ with bound $C (1+|\im z|)^{1-m}$ since the adjoint operator of  $(H-z)^{-m} $ is given by $(H-{\ol z})^{-m}$.  We can handle  $(H-\ol z)^{-m} $ in the exactly same way to obtain the restricted weak type $(p_\ast, q_\ast)$ estimate with bound $C (1+|\im z|)^{1-m}$. By duality and interpolation, we get all the desired estimates. 

By Corollary \ref{glo-est}, we have the estimate $\| \Pi_k f\|_{\qs,\infty}\le C\|f\|_{\ps,1}$ with $C$ independent of  $k$. Using this estimate, for $m\ge 2$ we get
\[ \|(H-z)^{-m} f\|_{\qs,\infty} \lesssim \sum_{k=0}^\infty |2k+d-z|^{-m} \|f\|_{\ps,1}\lesssim  (1+|\im z|)^{1-m}  \|f\|_{\ps,1}\] 
because $\sum_{k=0}^\infty |2k+d-z|^{-m} \le C_m (1+|\im z|)^{1-m}$   with $C_m$ independent of $z$ for  $m\ge2$ if \eqref{eq:z-dist} holds. Thus we need only to show
\Be \label{eq:rw} \|(H-z)^{-1} f\|_{\qs,\infty} \le C  \|f\|_{\ps,1}.\Ee
 If $ \re z>d-1$,   $z=2(n+\mu)+d +2i\tau$ for some $n\in \mathbb N_0$, $\mu\in(-\frac12,\frac12)$, and $\tau\in\R$ satisfying $|(\mu, \tau)|\ge c/2$ because of \eqref{eq:z-dist}.
 We note that 
 \[ (H-z)^{-1} = G_{\mu,\tau}\Big(\frac{2n+ d-H}2\Big)\] 
where  $G_{\mu, \tau}(t)=1/(i\tau +t +\mu)$. It is easy to see that  $G_{\mu, \tau}  \in \mathcal C(\mathcal B, 1)$ for some $\mathcal B>0$ provided that 
$\mu\in(-\frac12,\frac12)$, and $\tau\in\R$ satisfy $|(\mu, \tau)|\ge c/2$. Thus, by Proposition  \ref{ggg} the estimate \eqref{eq:rw}  holds uniformly in $z$. For the remaining case, i.e., $ \re z<d-1$, $z$ clearly stays  away from the eigenvalues of $H$, so $(H-z)^{-1}$ behaves like $H^{-1}$.  More precisely, we obtain the uniform estimate \eqref{eq:rw}  repeating the same argument as in the case $n<n_\circ$ of the proof of Proposition \ref{ggg}. This completes the proof. 
\end{proof}

The uniform resolvent estimate  in Theorem \ref{thm_resol} is a special case of the following.

\begin{cor}\label{co_uniform}
Let $d\ge 3$ and $m$ be a positive integer, and  let  $p,q$  be given  as in Theorem \ref{thm_carl}. 
Then, there is a constant $C=C(m)$ such that 
\Be
\label{h-resolventg}
\|(H-z)^{-m} f\|_q\le C (1+|\im z|)^{\frac d2(\frac1p-\frac1q)-m}\|f\|_p
\Ee
provided \eqref{eq:z-dist} holds. Furthermore, if $(\frac1p,\frac1q)=\mathscr B $ or $\mathscr B' $, we have the restricted weak type estimate for $(H-z)^{-m}$  
with  bound $C (1+|\im z|)^{\frac d2(\frac1p-\frac1q)-m}$.  
\end{cor}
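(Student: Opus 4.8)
The plan is to deduce the general bound \eqref{h-resolventg} from Theorem \ref{uniform} together with the Sobolev-type bounds for $H^{-s}$ in Theorem \ref{sob}, by interpolating along the segment of exponents joining the critical line to the point where Theorem \ref{uniform} applies. First I would fix attention on the case $m=1$, since once \eqref{h-resolventg} is established for $m=1$ the cases $m\ge 2$ follow directly: writing $(H-z)^{-m}=\sum_k (2k+d-z)^{-m}\Pi_k$, one simply sums the uniform bound $\|\Pi_k\|_{p\to q}\le C$ of Corollary \ref{glo-est} against $\sum_k |2k+d-z|^{-m}\lesssim (1+|\im z|)^{1-m}$, which converges for $m\ge 2$; this gives even the better exponent $1-m$, and since $\frac d2(\frac1p-\frac1q)-m\ge 1-m$ on $\mathfrak T$ (because $\frac1p-\frac1q\le \frac2d$ there) this is stronger than what is claimed. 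So the whole content is the $m=1$ estimate.

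For $m=1$, I would split according to whether $\ppq$ lies on the critical line $\frac1p-\frac1q=\frac2d$ or strictly below it. On the critical line, the relevant piece of $\mathfrak T$ is exactly the closed segment $[\mathscr B,\mathscr B']$ (with the restricted weak type statement at the endpoints $\mathscr B,\mathscr B'$), and here $\frac d2(\frac1p-\frac1q)-1=0$, so \eqref{h-resolventg} asserts precisely the uniform $L^p$--$L^q$ bound for $(H-z)^{-1}$; this is exactly Theorem \ref{uniform} with $m=1$, including the restricted weak type claims at the endpoints. For $\ppq$ with $\frac1p-\frac1q<\frac2d$, I would interpolate: the exponent $\ppq\in\mathfrak T$ lies on a segment connecting a point $\ppq_0$ on the critical line segment $[\mathscr B,\mathscr B']$ (where Theorem \ref{uniform} gives the uniform bound, i.e.\ with $z$-power $0$) to a point where we have good decay in $|\im z|$. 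The simplest choice for the second endpoint is to use the trivial spectral bound: for $p=q$, $\|(H-z)^{-1}\|_{p\to p}$ need not decay, but for the decaying endpoint I would instead write $(H-z)^{-1}=H^{-1}\circ (H(H-z)^{-1})=H^{-1}\circ(\mathrm{Id}+z(H-z)^{-1})$ and use Theorem \ref{sob}: $H^{-1}\colon L^{p}\to L^{q}$ is bounded whenever $\frac1p-\frac1q\le \frac2d$ with $1<p,q<\infty$, while $\mathrm{Id}+z(H-z)^{-1}$ has operator norm on $L^p$ bounded by $C(1+|z|)$ when $\re z$ is away from the spectrum, and by $C\,|\im z|^{-1}\cdot|z|\lesssim 1+|z|/(1+|\im z|)$ in general via the spectral theorem on $L^2$ combined with the uniform $L^p$ bound of the Marcinkiewicz-type multiplier argument used already in the proof of Proposition \ref{ggg}. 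Balancing the $z$-independent $H^{-1}$ gain against this growth, and interpolating (real interpolation, using the restricted weak type endpoints at $\mathscr B,\mathscr B'$ exactly as in the proof of Theorem \ref{uniform}) with the uniform estimate on the critical line, produces the claimed power $(1+|\im z|)^{\frac d2(\frac1p-\frac1q)-1}$.

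The main obstacle I expect is bookkeeping the $|\im z|$-dependence cleanly through the interpolation: one must arrange the two endpoint estimates so that the geometric interpolation of the $z$-powers lands exactly on $\frac d2(\frac1p-\frac1q)-1$, which forces the decaying endpoint to be chosen on the critical line $\frac1p-\frac1q=\frac2d$ as well but with the full $H^{-1}$-type decay — equivalently, to run the argument of Proposition \ref{ggg} keeping explicit track of the $\tau=\frac12\im z$ factors rather than treating them as $O(1)$. Concretely, in the decomposition $(H-z)^{-1}=\mathcal J_n+\mathcal K_n$ there, the piece $\mathcal K_n=H^{-1}\circ m_n(H)$ already carries the $H^{-1}$ decay while $m_n(H)$ is uniformly $L^p$-bounded; and the oscillatory piece $\mathcal J_n$, handled via \eqref{eq_proj} and Proposition \ref{st-est}, contributes the bounded factor $|\zeta_n(t)|\le C$ which in the non-critical range can be improved to a gain in $|\im z|$ by inserting the extra smoothing coming from $\frac1p-\frac1q<\frac2d$. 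So rather than a separate interpolation I would in fact re-examine the proof of Theorem \ref{uniform}/Proposition \ref{ggg} and observe that it yields \eqref{h-resolventg} directly with the stated power once the loss of $\frac d2(\frac1p-\frac1q)$ derivatives is reintroduced in the estimates for $\mathcal J_n$ and $\widetilde{\mathcal K_n}$; this is the only place any genuine (as opposed to bookkeeping) work is needed, and it is routine given Proposition \ref{st-est} and Theorem \ref{sob}.
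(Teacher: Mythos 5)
There are genuine gaps. First, your reduction of the case $m\ge 2$ to the crude summation bound rests on a reversed inequality: since $\frac1p-\frac1q\le\frac2d$ on $\mathfrak T$, one has $\frac d2(\frac1p-\frac1q)-m\le 1-m$, not $\ge$, so the bound $(1+|\im z|)^{1-m}$ obtained by summing $\|\Pi_k\|_{p\to q}\le C$ against $\sum_k|2k+d-z|^{-m}$ is \emph{weaker} than the claimed $(1+|\im z|)^{\frac d2(\frac1p-\frac1q)-m}$ except on the critical line. For instance at $(p,q)=(2,2)$ the corollary asserts decay $(1+|\im z|)^{-m}$, which your argument does not give. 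So the case $m\ge2$ is not disposed of, and the same interpolation machinery is needed for all $m$.

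Second, for $m=1$ off the critical line you never pin down a usable decaying endpoint. The factorization $(H-z)^{-1}=H^{-1}\circ(\mathrm{Id}+z(H-z)^{-1})$ is a dead end: $\|\mathrm{Id}+z(H-z)^{-1}\|_{p\to p}$ grows like $|z|/(1+|\im z|)$, which is unbounded as $\re z\to\infty$, so the product bound with $\|H^{-1}\|_{p\to q}$ is useless. Your fallback --- rerunning Proposition \ref{ggg} while ``reintroducing the loss of $\frac d2(\frac1p-\frac1q)$ derivatives'' in $\mathcal J_n$ --- is not carried out and is not routine: Proposition \ref{st-est} is a single endpoint estimate at $(1/p,1/q)=\mathscr B'$, and extracting a power $(1+|\im z|)^{\frac d2(\frac1p-\frac1q)-1}$ from the oscillatory piece at other exponents would itself require interpolating with $L^2$-based estimates, i.e.\ exactly the step you are trying to avoid. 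The missing idea, which is how the paper proceeds, is elementary: by Theorem \ref{uniform} one has \eqref{h-resolventg} on the segment $[\mathscr B,\mathscr B']$ (power $1-m$ there), and by orthogonality of the $\Pi_k$ one gets $\|(H-z)^{-m}f\|_2^2\le\sum_k|2k+d-z|^{-2m}\|\Pi_kf\|_2^2$, whence the powers $-m$ at $(2,2)$ (taking the sup over $k$) and $\frac12-m$ at $(\tfrac{2d}{d+2},2)$ (using $\sum_k|2k+d-z|^{-2m}\lesssim(1+|\im z|)^{1-2m}$ together with the uniform $L^{2d/(d+2)}\to L^2$ bound of Corollary \ref{glo-est}), plus $(2,\tfrac{2d}{d-2})$ by duality. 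Since $\frac d2(\frac1p-\frac1q)-m$ is affine in $(1/p,1/q)$, interpolating these vertex estimates with the critical segment yields the stated bound on all of $\mathfrak T$; Theorem \ref{sob} is not needed.
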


\begin{proof}
By Theorem \ref{uniform} we have the estimate \eqref{h-resolventg}  for $\ppq\in (\mathscr B, \mathscr B')$. 
In view of  interpolation, it is enough to show  \eqref{h-resolventg} with $(p,q)=(2,2)$, $(\frac{2d}{d+2},2)$ or $(2,\frac{2d}{d-2})$. 
These estimates are easy to show using  orthogonality between the projection operators  $\Pi_k$. In fact,  we have
\[ \|(H-z)^{-m} f\|_2 \le \Big(\sum_{k=0}^\infty |2k+d -z|^{-2m} \|\Pi_k\, f\|_2^2\Big)^{1/2}.
 \]
So, taking the supremum over $k$ of $|2k+d -z|^{-2m}$, we obtain \eqref{h-resolventg} when $p=q=2$.
We note that $ \sum_{k=0}^\infty |2k+d -z|^{-2m}\le C(1+|\im z|)^{-2m+1}$ with $C$ independent of  $z$ as long as \eqref{eq:z-dist} holds.  Applying the uniform
$L^\frac{2d}{d+2}$--$L^2$ estimate in  Corollary \ref{glo-est},  we get \eqref{h-resolventg} with $p=\frac{2d}{d+2}$ and $q=2.$ Since the adjoint of $(H-z)^{-m}$ is $(H-\bar z)^{-m},$ 
the estimate \eqref{h-resolventg} with $(p,q)=(\frac{2d}{d+2},2)$ implies that with  $(p,q)=(2,\frac{2d}{d-2})$ by duality.
 \end{proof}

\section{Proof of Theorem \ref{thm_carl}}  
 We now prove the estimate \eqref{carl} by adapting the argument in   Escauriaza and Vega \cite{ev01} (also see \cite{e00}) which  deduces Carleman estimate  for the heat operator  from the uniform resolvent  estimate for the Hermite operator.  We are basically relying on real interpolation as in \cite{St85}. However,  there are some nontrivial issues which are related to the shortcoming  of the real interpolation between mixed norm spaces. 

\begin{lem}\label{carl_pf} 
Let $1< p\le 2\le q< \infty$,  $1\le r,s\le \infty$, $1\le a\le b\le \infty$, and let $0\le \gamma\le 1$ and $\beta\notin \mathbb N_0$ be a  real number. 
Suppose that  the estimate 
\begin{equation}\label{eq_uni}
\Big\|\sum_{k=0}^\infty \frac{\Pi_k\,f}{(\tau i+\beta -k)^{m}}  \Big\|_{{q,b }}\le C_m(1+|\tau|)^{\gamma-m} \|f\|_{p,a} 
\end{equation}
holds for  $m=1,2,3$ with $C_m$ independent of  $\tau \in\R$ and $\beta$ provided $\dist(\beta,\mathbb N_0)\ge c$ for some $c>0.$  Then, if $\dist(\beta,\mathbb N_0)\ge c$ for some $c>0$,   the estimate \eqref{carl} holds uniformly in $\beta$ whenever  the following hold\! $:$ 
\vspace{-6pt}
\begin{enumerate} 
[leftmargin=0.8cm, labelsep=0.3 cm, topsep=0pt]
\item[$\bullet$]  $\gamma<1$, $0\le \frac1r-\frac1s\le 1-\gamma$, and  $(\frac1r,\frac1s)\neq (1,\gamma),$ $(1-\gamma,0)$.
\item[$\bullet$]   $\gamma=1$, $a=b=2$,  and $1<r=s<\infty$. 
\end{enumerate}
\end{lem}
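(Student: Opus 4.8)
The plan is to carry out the Escauriaza--Vega reduction of \eqref{carl} to a resolvent-type estimate for the Hermite operator, and then to convert that estimate --- in the precise form of the hypothesis \eqref{eq_uni} --- into the space--time bound by a one-dimensional convolution (resp.\ singular-integral) argument in a new time variable.

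\emph{Reduction.} Writing $g=e^{|x|^2/8t}\mathfrak g$ and passing to the variables $\sigma=\tfrac12\log t$, $y=x/\sqrt t$ (together with the harmless rescaling $y\mapsto 2y$ that turns $-\Delta+\tfrac1{16}|y|^2$ into $\tfrac14 H$), a direct computation, as in \cite{e00,ev01}, shows that $(\Delta+\partial_t)$, conjugated by $e^{|x|^2/8t}$ and read in the coordinates $(\sigma,y)$, equals $\tfrac1{2t}(\partial_\sigma-H_\circ)$ with $H_\circ:=\tfrac12(H-d)$, whose spectrum is $\mathbb N_0$. Absorbing the weight $t^{-\alpha}$ and the powers of $t$ produced by the changes of measure $L^s_t\to L^s_\sigma$, $L^{q,b}_x\to L^{q,b}_y$, $L^r_t\to L^r_\sigma$, $L^{p,a}_x\to L^{p,a}_y$ into the unknown, one checks that the exponent $1-\tfrac d2(\tfrac1p-\tfrac1q)-(\tfrac1r-\tfrac1s)$ appearing in \eqref{carl} is exactly what makes \emph{both} residual weights vanish; the operator $\partial_\sigma-H_\circ$ then becomes $\partial_\sigma+\mathcal H_\beta$ with $\mathcal H_\beta:=\beta-H_\circ$ and $\beta=2\alpha-\tfrac dq-\tfrac2s$, so that \eqref{carl} is equivalent (on the dense class corresponding to $g\in\mathrm C_c^\infty(\mathbb R^{d+1}\setminus\{(0,0)\})$) to the $\sigma$-translation-invariant inequality
\[
\big\|(\partial_\sigma+\mathcal H_\beta)^{-1}F\big\|_{L^s_\sigma L^{q,b}_x}\le C\,\|F\|_{L^r_\sigma L^{p,a}_x},
\]
to be proved uniformly for $\dist(\beta,\mathbb N_0)\ge c$; consistency of the $t$-powers pins down $\gamma=\tfrac d2(\tfrac1p-\tfrac1q)$, which is the value throughout. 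Since $\dist(\beta,\mathbb N_0)\ge c$, $\mathcal H_\beta$ is boundedly invertible, and, Fourier-transforming in $\sigma$, $(\partial_\sigma+\mathcal H_\beta)^{-m}$ is the $\sigma$-Fourier multiplier with operator-valued symbol $(i\tau+\mathcal H_\beta)^{-m}=\sum_k(i\tau+\beta-k)^{-m}\Pi_k$, whose $L^{p,a}_x\to L^{q,b}_x$ norm is $\lesssim(1+|\tau|)^{\gamma-m}$ for $m=1,2,3$ --- which is precisely \eqref{eq_uni}.

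\emph{The case $\gamma<1$.} Decompose the symbol of $(\partial_\sigma+\mathcal H_\beta)^{-1}$ dyadically in $\tau$: $T_0+\sum_{j\ge1}T_j$ with $T_j$ localized to $|\tau|\sim 2^j$. For the convolution kernel $K_j(\sigma)\in\mathcal L(L^{p,a}_x,L^{q,b}_x)$ of $T_j$, the $m=1$ bound gives $\|K_j(\sigma)\|\lesssim 2^{j\gamma}$, and two integrations by parts in $\tau$ --- whose derivatives land on the resolvent (producing $(i\tau+\mathcal H_\beta)^{-2}$ and $(i\tau+\mathcal H_\beta)^{-3}$, which is why $m=2,3$ are needed) and on the cutoff --- give the improvement $\|K_j(\sigma)\|\lesssim 2^{j\gamma}(2^j|\sigma|)^{-2}$; likewise $\|K_0(\sigma)\|\lesssim(1+|\sigma|)^{-2}$. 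Summing in $j$, the full kernel satisfies $\|K(\sigma)\|_{L^{p,a}_x\to L^{q,b}_x}\lesssim h(\sigma):=|\sigma|^{-\gamma}\mathbf 1_{\{|\sigma|\le 1\}}+|\sigma|^{-2}\mathbf 1_{\{|\sigma|>1\}}$ when $0<\gamma<1$ (with an inessential logarithm at $\sigma=0$ when $\gamma=0$, still in every $L^\rho_{\mathrm{loc}}$, $\rho<\infty$). Hence $\|(\partial_\sigma+\mathcal H_\beta)^{-1}F(\sigma)\|_{L^{q,b}_x}\le\big(h*\|F(\cdot)\|_{L^{p,a}_x}\big)(\sigma)$, and since $h\in L^\rho$ with $\tfrac1\rho=1-(\tfrac1r-\tfrac1s)$ whenever $0\le\tfrac1r-\tfrac1s<1-\gamma$, Young's inequality gives the claim; at the endpoint $\tfrac1r-\tfrac1s=1-\gamma$ one has $h\in L^{1/\gamma,\infty}$, and, away from the two excluded vertices (which are precisely $r=1$, i.e.\ $(\tfrac1r,\tfrac1s)=(1,\gamma)$, and $s=\infty$, i.e.\ $(\tfrac1r,\tfrac1s)=(1-\gamma,0)$), the Hardy--Littlewood--Sobolev inequality gives the claim. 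Note that no real interpolation in the $x$-variable is needed: the Lorentz indices $(p,a),(q,b)$ are carried all the way through from \eqref{eq_uni}, which is why the mixed-norm obstruction of \cite{cw} does not arise in this range.

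\emph{The case $\gamma=1$, and the main obstacle.} Now $h(\sigma)\sim|\sigma|^{-1}$ near $\sigma=0$ is no longer locally integrable, so the pointwise majorization above fails; the cancellation of the kernel must be kept, which is incompatible with a soft interpolation argument --- hence the confinement to $r=s$ with $1<r<\infty$ (consistent with the degenerate range $\tfrac1r-\tfrac1s=1-\gamma=0$) and to $a=b=2$, the only indices for which \eqref{eq_uni} is available in the critical case (cf.\ Corollary \ref{co_uniform}). The plan is to treat $(\partial_\sigma+\mathcal H_\beta)^{-1}$ as a convolution operator in $\sigma$ with $\mathcal L(L^{p,2}_x,L^{q,2}_x)$-valued kernel $K$ and to run vector-valued Calder\'on--Zygmund theory: the size bound $\|K(\sigma)\|\lesssim|\sigma|^{-1}\mathbf 1_{\{|\sigma|\le1\}}+|\sigma|^{-2}\mathbf 1_{\{|\sigma|>1\}}$ and the H\"ormander regularity $\int_{|\sigma|\ge 2|\sigma'|}\|K(\sigma-\sigma')-K(\sigma)\|\,d\sigma\lesssim 1$ both follow by summing the dyadic bounds of the previous step (the latter via the mean value theorem on the low dyadic blocks and the size bound on the high ones, still using only $m=1,2,3$), and, combined with a base estimate at $r=2$, these yield $L^r_\sigma(L^{p,2}_x)\to L^r_\sigma(L^{q,2}_x)$ for all $1<r<\infty$ by vector-valued Calder\'on--Zygmund theory together with duality. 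The hard part --- and the delicate point the lemma isolates --- is precisely this base estimate in the Lorentz-space-valued setting: it is to be obtained either directly, by a Littlewood--Paley/square-function argument in $L^2_\sigma(L^{p,2}_x)$ (using that $L^{p,2}_x,L^{q,2}_x$ are UMD spaces and that $q\ge 2$), or by an operator-valued Mikhlin-type multiplier theorem applied to the symbol $(i\tau+\mathcal H_\beta)^{-1}$, whose required bounds $\|(i\tau+\mathcal H_\beta)^{-l}\|_{L^{p,2}_x\to L^{q,2}_x}\lesssim(1+|\tau|)^{1-l}$ for $l=1,2$ are again \eqref{eq_uni}; and one must check that all implicit constants depend on $\beta$ only through $\dist(\beta,\mathbb N_0)\ge c$, which holds because every estimate in the argument has that property.
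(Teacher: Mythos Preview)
Your outline is correct and matches the paper's approach almost step for step: the Escauriaza--Vega change of variables reducing \eqref{carl} to a bound on the $\sigma$-convolution operator with symbol $\sum_k(i\tau+\beta-k)^{-1}\Pi_k$; for $\gamma<1$, the kernel bound $\|K(\sigma)\|\lesssim\min(|\sigma|^{-\gamma},|\sigma|^{-2})$ obtained from \eqref{eq_uni} with $m=1,2,3$ followed by Young/HLS (the paper uses a single cutoff $\phi(t\tau)$ rather than a dyadic sum, but the effect is identical); and for $\gamma=1$, vector-valued Calder\'on--Zygmund theory via the H\"ormander condition, reducing everything to the base case $r=s=2$.

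The one place where your proposal remains a plan rather than a proof is exactly the step you flag as the hard part: the $L^2_\sigma L^{p,2}_x\to L^2_\sigma L^{q,2}_x$ base estimate when $\gamma=1$. Your second alternative (operator-valued Mikhlin) is not immediate, since that theorem requires $R$-boundedness of the symbol family, not the mere norm bounds that \eqref{eq_uni} provides. The paper executes your first alternative concretely: it proves a Littlewood--Paley inequality in the mixed Lorentz scale, namely $\|g\|_{L^r_t L^{p,r}_x}\sim\|(\sum_j|\eta(2^{-j}|D_t|)g|^2)^{1/2}\|_{L^r_t L^{p,r}_x}$ (obtained by real interpolation of the classical inequality between nearby Lebesgue exponents), combines this at $r=2$ with the elementary square-function inequalities $\|(\sum|h_j|^2)^{1/2}\|_{L^{q,2}_x}\lesssim(\sum\|h_j\|_{L^{q,2}_x}^2)^{1/2}$ for $q\ge2$ and its dual for $p\le2$, and closes with the uniform-in-$j$ bound $\|\psi(2^{-j}|D_t|)S_\beta\|_{L^2_\sigma L^{p,2}_x\to L^2_\sigma L^{q,2}_x}\lesssim1$ coming from \eqref{eq_uni}. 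No UMD machinery is invoked.
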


Lemma \ref{carl_pf}   was implicit  in \cite{ev01}  with    the Lebesgue spaces instead of the Lorentz spaces. 
The extra condition $a=b=2$ when  $\gamma=1$ is due to limitation of the real interpolation in mixed norm spaces.   Once we have Lemma \ref{carl_pf},  the proof of Theorem \ref{thm_carl} is rather simple. 

 \begin{proof}[Proof of Theorem \ref{thm_carl}]  Let $\ppq$ be in $\mathfrak T$.  
 By real interpolation between the estimates in Corollary \ref{co_uniform} and inclusion relations between Lorentz spaces, we get \eqref{eq_uni} with $\gamma=\frac{d}2(\frac1p-\frac1q)$ for any $ 1\le a\le b\le\infty$ if  $p\neq 2$  
 and $q\neq 2$. Thus Lemma \ref{carl_pf} gives the estimate \eqref{carl} in the Lorentz spaces if  the exponents satisfy the condition in Theorem \ref{thm_carl}. 
\end{proof}

The estimate \eqref{carl}  is  equivalent to the Sobolev type  inequality
\begin{equation}\label{recarl1}
\| h\|_{L^s(\R; L_x^{q,b})}\le C \| (\Delta -|x|^2 +\partial_t + 2\beta +d)h\|_{L^r(\R; L_x^{p,a})},\quad h\in \mathrm C_c^\infty(\R^{d+1}).
\end{equation}
One can easily see this by following the argument in \cite{e00}.  
 Especially, if $r=s$, the inequality \eqref{recarl1} implies 
$ \|f\|_{q}\le C\| (\Delta-|x|^2 +2\beta +d) f\|_{p}$ for $f\in \mathrm C_c^\infty(\R^d)$ which is, in fact, a special case of \eqref{eq_resol} where $z=2\beta+d\not\in 2\mathbb N_0+d$.
Indeed, let $f_1$ be a compactly supported smooth function on $\R$ with $f_1(0)=1$. Then, the above estimate  follows by applying \eqref{recarl1} to the function $h(x,t)=f(x)f_1(t/R)R^{-1/r}$, $R>1$  and letting $R \to \infty$.

\begin{rem}\label{failure} When $r=s$, the implication    from \eqref{recarl1} to   \eqref{eq_resol} with $z=2\beta+d\not\in 2\mathbb N_0+d$    can be used to show that the Carleman estimate \eqref{carl} holds only if 
\[ 
 \frac1p-\frac1q\le \frac2d\,. 
 \]
 By the Marcinkiewicz multiplier theorem for the Hermite operator $H$ $($\cite[Theorem 4.2.1]{Th93}$)$
$(H-z)^{-1}H$ with $z=2\beta+d\not\in 2\mathbb N_0+d$ is bounded on $L^p$, $1<p<\infty$. Thus, we see that the Carleman estimate  \eqref{carl} implies  the estimate $
  \|H^{-1}u\|_{q}\lesssim \|u\|_{p}$ for $ u\in \mathrm C_c^\infty(\R^d). $
By Theorem \ref{sob}  the inequality  holds only if $1/p-1/q\le 2/d$.
\end{rem}

\begin{proof}[Proof of Lemma \ref{carl_pf}]
To  prove  Lemma \ref{carl_pf} we basically  rely on the argument in \cite{e00, ev01}, so  we shall be brief. By scaling,  it is easy to see  that 
\eqref{carl}  is equivalent to \eqref{recarl1}. See \cite{e00} for the details. Thus, 
we  need to show \eqref{recarl1}  by replacing  $h$ with   $(\Delta -|x|^2 +\partial_t + 2\beta +d)^{-1} g$. Applying the projection operator $\Pi_\lambda$  in $x$-variables and taking Fourier transform in $t$,  we  see  the operator $S_\beta:=(\Delta -|x|^2 +\partial_t + 2\beta +d)^{-1}$ is given by 
 \[S_\beta g(x,t) =\int_\R K_\beta(t-s)(g(\cdot, s))(x)ds,  \]
where the operator valued kernel $K_\beta$ is given   by 
\[ K_\beta(t)(f) =\frac12\int_\R e^{2\pi i t\tau} \sum_{k=0}^\infty \frac{\Pi_k(f)}{\pi i \tau +\beta -k}\, d\tau,\quad f\in \mathrm C_c^\infty(\R^d).\]
To prove \eqref{carl}, it is enough to show 
\begin{equation}\label{goal}
 \|S_\beta g\|_{L^s(\R; L_x^{q,b})}\lesssim \|g\|_{L^r(\R; L_x^{p,a})} ,\quad g\in \mathrm C_c^\infty(\R^{d+1})
\end{equation}
with implicit constant independent of $\beta$ as long as $\dist(\beta, \mathbb N_0)\ge c$ for some $c>0$.

We regard  $S_\beta$ as a  vector valued convolution operator. Let us first consider the case $\gamma<1$ which is easier. 
Let 
$\phi\in C^\infty_c([-1,1])$ such that $\phi(t)=1$ on $[-1/2,1/2]$. Breaking the integral 
with functions $\phi(t\tau), 1-\phi(t\tau)$ and using  integration by parts and \eqref{eq_uni}, it is easy to see that
$\|K_\beta (t)\|_{L_x^{p,a}\to L_x^{q,b}} \lesssim \min\{|t|^{-\gamma},|t|^{-2}\}.$
Since  $\gamma< 1$,    for $r, s$ satisfying $0\le \frac1r-\frac1s\le 1-\gamma$  and $(\frac1r,\frac1s)\neq (1,\gamma),$ $(1-\gamma,0)$ we obtain \eqref{goal} by Young's convolution inequality and the Hardy-Littlewood-Sobolev inequality.

We now turn to the case  $\gamma=1$. We claim that  the kernel $K_\beta$ satisfies the H\"ormander condition
\begin{equation}\label{horm}
 \sup_{s\neq 0}\int_{|t|>2|s|} \| K_\beta(t-s) -K_\beta(t)\|_{L^{p,2}\to L^{q,2}} \,dt \le A<\infty,\end{equation}
where $A$ is depending only on the constant  $c>0$ such that $\dist(\beta, \mathbb N_0)\ge c.$  To show \eqref{horm} it is sufficient to show $\|K_\beta'(t)\|_{L^{p,2} \to L^{q,2}}\lesssim |t|^{-2}$.\footnote{If $\|K_\beta'(t)\|_{L_x^{p,2}\to L_x^{q,2}} \lesssim |t|^{-2}$, 
$\|K_\beta(t-s)-K_\beta(t)\|_{L_x^{p,2}\to L_x^{q,2}} =\|\int_t^{t-s}  K_\beta'(\sigma) d\sigma \|_{L_x^{p,2}\to L_x^{q,2}} \lesssim |s||t|^{-2}.$
This clearly yields  \eqref{horm}.} By integration by parts we have 
\begin{align*}
 (-2\pi i t)^2 K_\beta'(t) 
 &=  2^2(\pi i)^3 \int_{-\infty}^\infty  \tau e^{2\pi i \tau t}\sum_{k=0}^\infty\frac1{(\pi\tau i +\beta -k)^3}\Pi_k \,d\tau.
 \end{align*} 
 The assumption \eqref{eq_uni} (with $\gamma=1$ and $m=3$) gives
$\||t|^2 K_\beta'(t)\|_{L^{p,2} \to L^{q,2}}\lesssim  1$
uniformly in $t$ and $\beta$ satisfying $\dist(\beta, \mathbb N_0)\ge c,$ which proves the claim  \eqref{horm}. 
Thanks to \eqref{horm} and the usual vector valued singular integral theory, in order to prove \eqref{goal} for $1<r=s<\infty$, it suffices to obtain the estimate \eqref{goal} with  $r=s=2$ and $a=b=2$.

For $\eta\in \mathrm C_c^\infty(\mathbb R)$ we define $\eta(D_t)$ by  $\mathcal F_t(\eta(D_t) g)(x,\tau) =\eta(\tau)\mathcal F_t {g}(x,\tau)$ where $\mathcal F_t$ denotes the Fourier transform in $t$. We use the following Littlewood-Paley type inequality in the Lorentz spaces. 

\begin{lem}\label{LPm} Let  $1<p,r<\infty$.  Suppose $\eta$ is a smooth function supported in $[2^{-2},1]$ which  satisfies $\sum_{j=-\infty}^\infty |\eta(2^{-j}t)|^2\sim 1$ for all $t>0$.  Then we have
\begin{equation}\label{eq:lpm}
\|g\|_{L^r_t(\R; L^{p,r}_x)} \lesssim  \| (\sum_{j\in\Z}|\eta(2^{-j}|D_t|)g|^2)^{1/2}\|_{L^r_t(\R; L_x^{p,r})} \lesssim \|g\|_{L^r_t(\R; L_x^{p,r})}.
 \end{equation}
\end{lem}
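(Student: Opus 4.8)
The plan is to prove the vector-valued Littlewood--Paley inequality \eqref{eq:lpm} by transferring the classical scalar Littlewood--Paley theory for the multipliers $\eta(2^{-j}|D_t|)$ on $L^r(\R)$ to the Banach-space-valued setting with values in the Lorentz space $L^{p,r}_x(\R^d)$. The key observation is that $L^{p,r}_x$ with $1<p,r<\infty$ is a UMD space (indeed it is a reflexive Lorentz space, which can be obtained by real interpolation between $L^{p_0}$ and $L^{p_1}$ for $p_0<p<p_1$, and the UMD property is stable under real interpolation), so the Bourgain--Burkholder vector-valued Littlewood--Paley/Mihlin--H\"ormander theory applies. First I would recall that for a UMD space $X$, the Fourier multiplier operators $\eta(2^{-j}|D_t|)$ acting on $L^r_t(\R;X)$ satisfy the square-function estimates $\|g\|_{L^r_t(\R;X)}\lesssim \|(\sum_j |\eta(2^{-j}|D_t|)g|_X^2)^{1/2}\|_{L^r_t(\R;X)}\lesssim \|g\|_{L^r_t(\R;X)}$, where the inner expression is understood via the natural Rademacher/square-function in $X$; for $X=L^{p,r}_x$ the square function $(\sum_j |\cdot|^2)^{1/2}$ is exactly the pointwise-in-$x$ one appearing in the statement.

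Concretely, the middle term of \eqref{eq:lpm} should be read as $\|(\sum_{j\in\Z}|\eta(2^{-j}|D_t|)g(\cdot,t,\cdot)|^2)^{1/2}\|_{L^r_t(\R;L^{p,r}_x)}$ where for each fixed $t$ the square root is taken pointwise in $x$. The right-hand inequality (the square function controls the function) follows from the UMD-valued Mihlin multiplier theorem applied to the operator $g\mapsto (\eta(2^{-j}|D_t|)g)_{j}$ mapping $L^r_t(\R;L^{p,r}_x)$ into $L^r_t(\R;\ell^2(L^{p,r}_x))$; here one uses that $\ell^2(L^{p,r}_x)$ is again UMD and the vector of symbols $(\eta(2^{-j}\tau))_j$ is a Mihlin multiplier with values in $\ell^2$ uniformly, because the support and derivative bounds of $\eta$ are scale invariant and finitely overlapping. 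The left-hand inequality follows by the standard duality/randomization argument: writing $g = \sum_j \widetilde\eta(2^{-j}|D_t|)\eta(2^{-j}|D_t|) g$ for a fattened bump $\widetilde\eta$ with $\widetilde\eta\,\eta=\eta$, one pairs against a test function and applies the square-function bound already proved to both $g$ and the dual function, together with Khintchine's inequality in $L^{p,r}_x$ (valid since $L^{p,r}_x$ has finite cotype, being an interpolation space between $L^{p_0}$ and $L^{p_1}$).

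The main obstacle is handling the Lorentz space $L^{p,r}_x$ rather than a plain Lebesgue space: one must confirm both that $L^{p,r}_x$ is UMD for $1<p,r<\infty$ and that Khintchine-type (finite-cotype) inequalities hold there with the pointwise square function on the right. Both facts are available in the literature --- $L^{p,r}$ is a reflexive rearrangement-invariant space with nontrivial type and cotype, and the real interpolation space of UMD spaces is UMD --- so after invoking these the transference of the scalar Littlewood--Paley estimate is routine. An alternative, more self-contained route, which I would mention as a fallback, is to prove \eqref{eq:lpm} first for $g$ with values in $L^{p_0}_x$ and $L^{p_1}_x$ (ordinary mixed-norm Littlewood--Paley, which is classical) and then obtain the $L^{p,r}_x$ statement by real interpolation of the vector-valued operator $g\mapsto (\sum_j|\eta(2^{-j}|D_t|)g|^2)^{1/2}$ between those two endpoints; this avoids citing abstract UMD theory but requires care because real interpolation interacts with the $\ell^2$-square function --- exactly the kind of mixed-norm subtlety flagged earlier in the paper --- so one restricts to the case where the outer exponent equals the Lorentz second index, which is precisely the $L^r_t(\R;L^{p,r}_x)$ appearing here.
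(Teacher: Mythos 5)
Your main route is different from the paper's, and your fallback is essentially the paper's proof. The paper proves only the second inequality in \eqref{eq:lpm} (deducing the first by polarization and duality, as you also do with the fattened bump $\widetilde\eta$), and it gets the second inequality by taking the classical mixed-norm Littlewood--Paley bound on $L^r_t(\R;L^{p_0}_x)$ and $L^r_t(\R;L^{p_1}_x)$ (via vector-valued singular integrals, citing \cite[Lemma 2.1]{ev01}) and then applying the Lions--Peetre/Cwikel real-interpolation identity $(L^{p_0}(\R;L^{q_0}),L^{p_1}(\R;L^{q_1}))_{\theta,p}=L^{p}(\R;L^{q,p})$; this is exactly why the second Lorentz index is forced to equal the outer time exponent $r$, the restriction you correctly identify in your alternative route. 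Your primary argument via the UMD property of $L^{p,r}_x$, Bourgain's vector-valued Littlewood--Paley theorem, and the Khintchine--Maurey equivalence between Rademacher averages and the lattice square function in a finite-cotype lattice is a legitimate (if heavier) alternative; it buys generality (it would work for other UMD lattices and does not by itself force the index matching), at the cost of invoking abstract machinery the paper avoids. One caution on your main route: the target space should be the lattice-valued space $L^{p,r}_x(\ell^2)$ (square sum inside the $x$-norm), not $\ell^2(L^{p,r}_x)$ as written --- these are genuinely different, and only the former matches the middle term of \eqref{eq:lpm}; you implicitly fix this when you invoke Khintchine--Maurey, but the Mihlin-multiplier step should be formulated with the $X(\ell^2)$-valued symbol (or, more cleanly, with randomized sums) to avoid proving the wrong estimate.
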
 
\begin{proof}
It is sufficient to show the second inequality in \eqref{eq:lpm} because the first inequality follows from the second via the standard polarization argument and duality. 
For any $1<p,r<\infty$ we  have   $\| (\sum_{j\in\Z} |\eta(2^{-j}|D_t|)g|^2)^{1/2}\|_{L^r(\R; L^{p}(\R^d))} \lesssim \|g\|_{L^r(\R; L^{p}(\R^d))} $  
by means of  the usual Littlewood-Paley inequality and the vector valued singular integral theorem (see \cite[Lemma 2.1]{ev01}). We interpolate these estimates using 
 the real interpolation in the mixed-norm spaces, especially,  
\[ (L^{p_0}(\R; L^{q_0}), L^{p_1}(\R; L^{q_1}))_{\theta, p} =L^{p}(\R ; L^{q,p})\] 
whenever $p_0,q_0,p_1,q_1\in[1,\infty)$  and $\ppq=(1-\theta) (1/{p_0},1/{q_0}) =\theta(1/{p_1},1/{q_1})$ with $\theta\in (0,1)$ (see \cite{cw,LP}). Therefore,   we obtain the second inequality in \eqref{eq:lpm}. 
\end{proof}

We now note that 
$\psi(2^{-j}|D_t|) S_\beta g(x,t) =\int_{\R} K_{\beta,j}(t-s)g(\cdot, s)(x) dt$ where
\[ K_{\beta,j}(t)f(x): =\frac12\int_\R e^{2\pi it\tau} \psi\Big(\frac{|\tau|}{2^{j}}\Big)\sum_{k=0}^\infty \frac{1}{\pi i \tau +\beta-k}\Pi_kf(x) d\tau\,.\]
Using \eqref{eq_uni} with $a=b=2$ and integration by parts, we note that  $\|K_{\beta, j}(t)\|_{L_x^{p,2}\to L_x^{q,2}} $ $ \le C2^j(1+2^{j}|t|)^{-2}$ with $C$ independent of $j$ and $\beta$ if $\dist(\beta, \mathbb N_0)\ge c>0$. Thus, Young's convolution inequality gives
\begin{equation}\label{tem_QS}
\|\psi(2^{-j}|D_t|) S_\beta g\|_{L^2(\R;L_x^{q,2})}\lesssim \|g\|_{L^2(\R; L_x^{p,2})} 
\end{equation}
 with the implicit constant independent of  $j$ and $\beta$. To get  the desired \eqref{goal} with  $r=s=2$, we combine this inequality and Lemma \ref{LPm}. 
Since $2\le q<\infty$, the space $L^{( q/2),(2/2)}$ is normable.  So, 
\Be
\label{easy}
 \| (\sum_j |h_j|^2)^{1/2}\|_{L_x^{q,2}}\lesssim (\sum_j \| h_j\|_{L_x^{q,2}}^2)^{1/2} 
 \Ee
 Since $S_\beta g=\sum_{j\in\mathbb Z}\psi(2^{-j}|D_t|) S_\beta g$, applying Lemma \ref{LPm} and then \eqref{easy}, we have 
\begin{align*}
\|S_\beta g\|_{L^2(\R;L_x^{q,2})}
\lesssim  (\sum_{j\in\mathbb Z}\|\psi(2^{-j}|D_t|) S_\beta g \|_{L^2(\R;L_x^{q,2})}^2)^{\frac12}.
\end{align*}
Let $\widetilde \psi\in C_c([2^{-2}, 1])$ such that $\psi\widetilde \psi=\psi$, so $\psi(2^{-j}|D_t|) S_\beta g=\psi(2^{-j}|D_t|) S_\beta \widetilde\psi(2^{-j}|D_t|) g$. 
Using \eqref{tem_QS} followed by \eqref{eq:lpm}, we get 
\[\|S_\beta g\|_{L^2(\R;L_x^{q,2})} \lesssim   (\sum_{j\in\mathbb Z}\|\widetilde \psi(2^{-j}|D_t|) g \|_{L^2(\R;L_x^{p,2})}^2)^{1/2}.
\] 
By duality the inequality \eqref{easy} is equivalent to $ (\sum_j \| h_j\|_{L_x^{p,2}}^2)^{1/2} \lesssim \| (\sum_j |h_j|^2)^{1/2}\|_{L_x^{p,2}}$ for $1<p\le2$.
Thus, using Lemma  \ref{LPm} we get 
\[\|S_\beta g\|_{L^2(\R;L_x^{q,2})} \lesssim   \|(\sum_{j\in\mathbb Z} |\widetilde \psi(2^{-j}|D_t|) g|^2)^{1/2} \|_{L^2(\R;L_x^{p,2})}
\lesssim  \|g \|_{L^2(\R;L_x^{p,2})}.
\]   
This completes the proof. 
\end{proof}

\subsection*{Acknowledgements}
This work was  supported by the POSCO Science Fellowship and Grant no. NRF-2020R1F1A1A01048520 (E. Jeong) and Grant  no. NRF-2021R1A2B5B02001786 (S. Lee and J. Ryu).

\end{document}